\newif\ifMP
\useunder{\uline}{\ul}{}
	\newtheorem{definition}{Definition}
	\newtheorem{theorem}{Theorem}
	\newtheorem{corollary}{Corollary}
	\newtheorem{lemma}{Lemma}
	\newtheorem{example}{Example}
\DeclareMathOperator*{\argmax}{argmax}
\DeclareMathOperator*{\conv}{Conv}
\newcommand{\red}[1]{\textcolor{black}{#1}}
\newcommand{\dist}[0]{\ensuremath{\textrm{d}}}
\newcommand{\R}{\mathbb{R}}
\newcommand{\bZ}[0]{\boldsymbol Z}
\newcommand{\A}[0]{\mathcal{A}}
\newcommand{\D}[0]{\boldsymbol D}
\newcommand{\sign}[0]{\ensuremath{\textrm{sign}}}
\newcommand{\Var}[0]{\ensuremath{\textrm{Var}}}
\newcommand{\E}[0]{\ensuremath{\mathbb{E}}}
\renewcommand{\P}{P}
\newcommand{\Gp}{G^{\textrm{pack}}_{A, \J}}
\newcommand{\J}{\mathcal{J}}
\newcommand{\proj}{\textrm{proj}}
\newcommand{\Z}{\mathbb{Z}}
\newcommand{\Pp}{P^{\mathcal{V},\textrm{pack}}}
\newcommand{\pack}{\textrm{pack}}
	\newcounter{mynotes}
	\newcommand{\mnoteless}[1]{}
	\newcommand{\mnote}[1]{}
	\newcommand{\mnoter}[1]{}
	\newcommand{\snote}[1]{}
	\newcounter{mynotes}
	\newcommand{\mnoteless}[1]{\addtocounter{mynotes}{1}{\textcolor{green}{$^{\arabic{mynotes}}$}}%
	\marginpar{\scriptsize \textcolor{green}{ {\arabic{mynotes}.\ {\sf {#1}}}}}}
	\newcommand{\mnote}[1]{\addtocounter{mynotes}{1}{\textcolor{blue}{$^{\arabic{mynotes}}$}}%
	\marginpar{\scriptsize \textcolor{blue}{ {\arabic{mynotes}.\ {\sf {#1}}}}}}
	\newcommand{\mnoter}[1]{\addtocounter{mynotes}{1}{\textcolor{red}{$^{\arabic{mynotes}}$}}%
	\marginpar{\scriptsize \textcolor{red}{ {\arabic{mynotes}.\ {\sf {#1}}}}}}
	\newcommand{\snote}[1]{\addtocounter{mynotes}{1}{\textcolor{red}{$^{\arabic{mynotes}}$}}%
	\marginpar{\scriptsize \textcolor{purple}{ {\arabic{mynotes}.\ {\sf {#1}}}}}}
\def \myspace {-5pt}
\title{Theoretical \red{challenges} towards cutting-plane selection}
	\journalname{Mathematical Programming B}
	\author{Santanu~S.~Dey \and Marco~Molinaro}
	\institute{
	Santanu~S.~Dey \at
	School of Industrial and Systems Engineering,
	Georgia Institute of Technology \\
	\email{santanu.dey@isye.gatech.edu}
	\\
	Marco~Molinaro \at
	Computer Science Department,
	PUC-Rio \\
	\email{mmolinaro@inf.puc-rio.br}
	}
	\date{Received: date / Accepted: date}
	\author[1]{Santanu S. Dey\thanks{santanu.dey@isye.gatech.edu}}
	\author[2]{Marco Molinaro\thanks{molinaro@inf.puc-rio.br}}
	\affil[1]{\small School of Industrial and Systems Engineering, Georgia Institute of Technology} 
	\affil[2]{\small Computer Science Department, Pontifical Catholic University of Rio de Janeiro}
\begin{document}
\maketitle
\begin{abstract}
While many classes of cutting-planes are at the disposal of integer programming solvers, our scientific understanding is far from complete with regards to cutting-plane selection, i.e., the task of selecting a portfolio of cutting-planes to be added to the LP relaxation at a given node of the branch-and-bound tree. In this paper we review the different classes of cutting-planes available, known theoretical results about their relative strength, important issues pertaining to cut selection, and discuss some possible new directions to be pursued in order to accomplish cutting-plane selection in a more principled manner. Finally, we review some lines of work that we undertook to provide a preliminary theoretical underpinning for some of the issues related to cut selection.
\end{abstract}

\section{Introduction}\label{sec:intro}
Mixed-integer linear programming (MILP) has become a mature area of research both in theory and practice, with powerful commercial and non-commercial solvers available, like \texttt{CPLEX}, \texttt{GUROBI}, \texttt{XPRESS}, and \texttt{SCIP}, to name a few. These modern state-of-the-art solvers use linear programming (LP) relaxation as the primary work-horse to provide dual bounds, which is further enhanced with the use of cutting-planes: linear inequalities that are valid for all integral feasible solutions but not implied by the LP relaxation. We assume familiarity with basic cutting-plane theory; for an introduction to this subject, we refer to the books~\cite{NemWolBook,IPCCZbook}.

We recount here a well-known anecdote about the complete change in outlook towards the use of general-purpose cutting-planes within solvers. Until the early 90s the research community was unanimous: \begin{quote}``In order to solve integer programs of meaningful sizes, one had to exploit the structure of the underlying combinatorial problem.''~\cite{cornuejols:2007}\end{quote} In particular, general-purpose cutting-planes were seen as mostly of theoretical interest. However, in the mid 90s a breakthrough came with a series of papers by Balas, Ceria, Cornu\'ejols, and Natraj, where they obtained striking computational results using Gomory Mixed Integer cuts and the then newly discovered Lift-and-Project cuts, within the branch-and-bound framework~\cite{balas:ce:co:93,balas:ce:co:na:96}. These papers show the efficacy of general-purpose cutting-planes, when \emph{properly selected and employed}. 

Today, even more families of cutting-planes are known. While we do not attempt to survey the vast literature on cutting-planes, we discuss the broad perspectives from which they are derived, with some representative examples.
\vspace{\myspace} \newline \newline 
\noindent \textbf{Geometric cuts.} \red{A set $Q \subseteq\mathbb{R}^n$ is called \emph{lattice-free} if $\textup{int}(Q) \cap \mathbb{Z}^n = \emptyset$. Given a mixed-integer set $P \cap (\mathbb{Z}^n \times \mathbb{R}^q)$ and a lattice-free set $Q$, observe that $$P \cap (\mathbb{Z}^n \times \mathbb{R}^q) \subseteq P \setminus (\textup{int}(Q) \times  \mathbb{R}^q), $$ and therefore valid inequalities for $P \setminus (\textup{int}(Q) \times  \mathbb{R}^q)$ are valid for  $P \cap (\mathbb{Z}^n \times \mathbb{R}^q)$. We call such cuts \emph{geometric cuts}.} \red{A set $Q$ is a \emph{maximal lattice-free convex set} if it is lattice-free, convex, and no other lattice-free convex set properly contains it. A beautiful structural result is that all maximal lattice-free convex sets are polyhedral~\cite{lovasz:1989,basu:co:co:za:2009a}. Note that when $Q$ is a polyhedral set $ \{x\in \mathbb{R}^n\,|\, (a^i)^\top x \leq b^i,~i \in [m]\}$ one can generate a geometric cut by ensuring its validity for all the \emph{disjunctions} identified with the facets of $Q$, i.e., $P \cap \{(x,y) \in \mathbb{R}^{n + q}\,|\, (a^i)^\top x \geq b^i \}$ for $i \in [m]$.}

\red{Famous geometric cuts} include the \emph{Chv\'atal-Gomory (CG)} cuts~\cite{Gomory58,caprara:fi:1996,letchford:lo:2002b,DBLP:journals/mp/DeyRLM10}, \emph{implied bounds}~\cite{hoffman:pa:1991}, and more generally \emph{split cuts}~\cite{balas:1979,cook:ka:sc:1990,balas:pe:2002}. \red{For all these cuts, the underlying lattice-free set is the so-called split set: $\{x \in \mathbb{R}^n \,|\, \pi_0 \leq \pi^\top x \leq \pi_0 + 1\}$, where $\pi_0 \in \mathbb{Z}$ and $\pi \in \mathbb{Z}^n$.  More general geometric cuts} include \emph{cuts from maximal lattice-free convex sets}~\cite{andersen:lo:we:wo:2007,borozan:2007} (see the excellent survey~\cite{basu2015geometric})  and the closely related \emph{intersection cuts} developed by Balas in 1970s~\cite{balas:1971}. Geometric cuts called \emph{multi-branch split cuts}~\cite{li:ri:2008,DashDG12,DashG13} \red{are based on non-convex lattice-free sets obtained as the union of split sets.} 
\vspace{\myspace} \newline \newline 
\noindent \textbf{Structured relaxations.} \red{Given a mixed-integer set $P \cap (\mathbb{Z}^n \times \mathbb{R}^q)$, the basic idea is to construct a relaxation of $P$, say $R \supseteq P$, and analyze the mixed-integer set $R \cap (\mathbb{Z}^n \times \mathbb{R}^q)$. Valid inequalities for the relaxation then serve as cutting-planes
for the original set, and the additional structure of $R$ eases the search for facets or strong inequalities. Two excellent review articles discussing these cutting-planes are~\cite{marchand:ma:we:wo:2002,johnson:ne:sa:2000}.}

\red{One of the most important families of structured relaxations are the \emph{single constraint relaxations}, i.e., relaxations described by one non-trivial constraint together with various variable bounds and integrality restrictions.} \red{Given a mixed-integer linear set, a single constraint relaxation can be obtained by taking a positive combination of the original constraints. We discuss this in more detail in Section~\ref{sec:agg}.} Single constraint relaxations include \emph{knapsack constraint with binary and continuous variables}~\cite{wolsey:1975,balas:1975,zemel:1978,balas:ze:1984,crowder:jo:pa:1983,carr2000strengthening,hammer1975facet,weismantel19970,van1986valid} (which yield the much used \emph{lifted cover inequalities}),
and \emph{knapsack constraints with general integer and continuous variables}~\cite{atamturk:2003,atamturk:2004,richard:li:mi:2009,atamturk2010mingling,dash2010two}, of which a special case is the \emph{MIR set}~\cite{nemhauser:wo:1990,wolseybook}. \red{As a concrete example, the \emph{mixed-integer rounding (MIR)} set is defined as $\{(x, y) \in \mathbb{Z} \times \mathbb{R}_+\,|\, x + y \geq b \}$. The only facet missing in this description is the \emph{MIR inequality} $$x + \frac{1}{f}\cdot y \geq \lceil b \rceil, $$ where $f = b - \lfloor b\rfloor > 0 .$ The general MIR inequality can be obtained by embedding a general mixed-integer set into the MIR set through column and constraint aggregations, see~\cite{nato} for details.}


\red{\emph{Multiple constraint relaxations} have also been studied, i.e., when the relaxation is described using multiple non-trivial constraints.} Multiple constraint relaxations include those studied in~\cite{agra:co:2007} and the \emph{mixing set}~\cite{gunluk:po:2001,dey:wo:mixing:2010,dash2009mixing,sanjeevi2012mixed} (both generalizing the MIR set), \emph{stable sets}~\cite{johnson:pa:1982,atamturk:ne:sa:2000} (which yield the useful \emph{clique inequalities} applied on conflict graphs by solvers), \emph{fixed-charge networks}~\cite{wolsey1989submodularity,padberg:va:wo:1985,gu:ne:sa:1999,atamturk2017path} (used to obtain \emph{flow cover, path cover, and pack inequalities}), and the \emph{corner and group relaxations}~\cite{gomory:1969,gomory:jo:1972a,gomory:jo:1972b,johnson:1974,johnson:1981}. The most important cuts obtained via the latter are the \emph{Gomory mixed-integer cuts (GMIC)}~\cite{gomory:1960b}; also see \cite{cornuejols:li:va:2003,andersen:co:li:2005} for important variants of GMICs. \red{Given an equalitiy constraint of the form $$\sum_{j= 1}^n a_j x_j  + \sum_{j = 1}^q g_j y_j  = b $$ where $x_j \in \mathbb{Z}_{+}$ and $y_j \in \mathbb{R}_{+}$, the GMIC inequality is given by: $$\sum_{ j =1}^n \textup{min} \left\{ \frac{f_j}{f_0}, \frac{1 - f_j}{ 1 - f_0}\right\}x_j + \sum_{j = 1}^q \textup{max}\left\{ \frac{g_j }{f_0}, \frac{ - g_j}{ 1- f_0} \right\}y_j \geq 1, $$ where $f_j = a_j - \lfloor a_j \rfloor$ and $f_0 = b - \lfloor b \rfloor$. }

The study of the group relaxation has been very popular in the last 10-15 years ~\cite{gomory:jo:2003,gomory:jo:ev:2003,araoz:ev:go:jo:2003,dash2006valid,miller2008new,dey:ri:2008,balas:je:1980,dey:wo:2008a,co:co:za:2009b,basu:co:ko:2011,averkov2015lifting,basu2015operations,dey2010relations,averkov2015lifting,koppe2015electronic,koppe2017new}. Many beautiful and non-trivial structural results have been proved regarding valid inequalities for these relaxations~\cite{basu2013k+1,basu2014equivariant,basu2016minimal,deywolseysfree,YildizC16}. See~\cite{RichardDey,basu2015geometric,basu2016light1,basu2016light2} for reviews on the group relaxation and its variants.
\vspace{\myspace} \newline \newline  				
\noindent \textbf{Subadditive cutting-planes.} \red{An important concept that was first discovered in relation to the group relaxation is the notion of \emph{subadditive cutting-planes}~\cite{johnson:1974}. We state it here for a general pure-integer program: Consider a set $S:= \{x \in \mathbb{Z}^n_+ \,|\, Ax \geq b \}$, where $A \in \mathbb{Q}^{m \times n}$, and a function $f: \mathbb{R}^m \rightarrow \mathbb{R}$ that satisfies the following: 
\begin{enumerate}
\item $f$ is \emph{subadditive}, i.e., $f(u) + f(v) \geq f(u + v)$ for all $u,v \in \mathbb{R}^m$,
\item $f$ is \emph{non-decreasing}, i.e., $f(u) \geq f(v)$ for all $u - v \in \mathbb{R}^m_+$,
\item $f(\textbf{0}) = 0$.
\end{enumerate}  
It is straightforward to verify that $\sum_{j = 1}^n f(A^j)x_j \geq f(b)$ is a valid cutting-plane for $S$. More importanty, it can be shown that the convex hull of $S$ can be obtained by such valid inequalities. 
See~\cite{bell:sh:1977,jeroslow1978cutting,jeroslow1979minimal,johnson:1979,guzelsoy2007duality,moran2012strong} for generalizations to mixed-integer sets and other extensions. 
}
\vspace{\myspace} \newline \newline  				
\noindent \textbf{Convexification via extended formulations and algebraic properties.} The key idea here is to go to a higher dimension, usually the space of products of variables, and then use algebraic properties of the matrix of these variables to imply non-trivial inequalities in the original space. Some of these methods include generating standard cuts like split cuts for extended formulations~\cite{bonami2014cut,bodur2017cutting}, the LP based hierarchies \emph{Reformulation-linearization technique (RLT)}~\cite{sherali:ad:1990,sherali:ad:1994,sherali:sm:ad:2000}, \emph{Lift-and-project cuts (L\&P)}~\cite{balas:ce:co:93,balas:pe:2002}, the semidefinite programming hierarchies due to Lov\'asz and Schrijver \emph{($N_{+}$)}~\cite{Lovasz91}, the \emph{Bienstock and Zuckerberg (BZ) hierarchy}~\cite{bienstock2004subset}, and the more general 
\emph{Lasserre hierarchy}~\cite{lasserre2001global}.  \red{See the excellent article by Rothvo{\ss}~\cite{rothvoss2013lasserre} on applications of these cuts to develop approximation algorithms.}

\subsection{Theoretical analysis of the strength of cutting-planes}\label{sec:cutstr}

Given this wealth of cuts at our disposal, a crucial question is that of \emph{cutting-plane selection}: Which cuts should a solver employ at a given point in time? Motivated by this issue, there is a large literature on theoretical analysis of cutting-planes trying to better understand the strength of different families of cuts. While the overall goal is be able to solve MILP models efficiently, given the complexity of solvers one cannot expect to be able to perform a theoretical analysis of this performance metric directly, and thus proxy measures are required. In fact, many notions of usefulness or strength of cuts have been proposed. We briefly survey known results in this area. 
\vspace{\myspace} \newline \newline 
\textbf{Finite Cutting-plane Algorithms.} One natural question is: can we solve an MILP in finite time using solely cuts from a given family without branching or any other operation? Gomory gave the first finitely convergent  cutting-plane algorithm for pure integer programs~\cite{Gomory58}, using CG cuts. There have been very few results since then on finite cutting-plane algorithms, see~\cite{neto} and references therein. See~\cite{ChandrasekaranV16} for a polynomial-time cutting-plane algorithm for matching, \cite{ChenKS11} for a finite cutting-plane algorithm for bounded MILPs, and~\cite{DashG13} for a finite cutting-plane algorithm for general MILPs. There are also lower bounds on the length of cutting-plane proofs (length of sequence of cuts needed to prove optimality or infeasibility)~\cite{chvatal1973}. For example, generalizing previous work of Pudl\'ak~\cite{pudlak1997}, Dash~\cite{dash:2005} presents exponential lower bounds for proofs via branch-and-cut procedures that use L\&P and CG cuts.


{\color{black}
\vspace{6pt}
\noindent \textbf{Closure inclusion.} Given a polyhedron $P$ and a family $M$ of cutting-planes, we define the \emph{closure} $M(P)$ as the intersection of all cuts from this family valid for the (mixed-)integer solutions in $P$. For any family of cuts the closure $M(P)$ contains the integer hull, and a smaller closure means a stronger approximation of the latter. 

	A fundamental result is that the GMIC closure, the MIR closure, and the split closure are equal~\cite{nemhauser:wo:1990,CornuejolsLi01}. The L\&P closure~\cite{balas:ce:co:93}, defined only for binary MILPs, is weaker than these other closures. Marchand and Wolsey~\cite{marchand:wo:1999} prove that various well-know classes of cutting-planes are all MIR cuts; thus, the split closure is stronger than the closures of these classes of cuts. Cornu\'ejols and Li~\cite{CornuejolsLi01} provide comparisons between several basic closures, including the split, L\&P, and RLT closures. The papers~\cite{li:ri:2008,DashG13} show that the $t$-branch closure can be strictly stronger than infinitely many iterations of the $k$-branch closure whenever $t > k$. See also~\cite{DashDG12,DASH2011305,dash2015relative}. Laurent~\cite{laurent2003comparison} compares RLT, $N_{+}$, and the Lasserre closures for 0/1 polytopes and shows that the Lasserre construction provides the tightest relaxations. See \cite{Mastrolilli17} for an augmented version of the Lasserre hierarchy for 0/1 problems using high-degree polynomials that implies the BZ hierarchy, and comparison with the CG closure.


\vspace{6pt}
\noindent \textbf{Rank.} \emph{Rank} is a very popular measure of strength, measuring the number of ``round'' of cuts needed to obtain the integer hull. More precisely, the $i^{th}$ closure $M^i(P)$ is defined inductively by applying the closure operator to the set $M^{i-1}(P)$; the \emph{rank} of $P$ with respect to the family of cuts $M$ is then the smallest integer $i$ such that $M^i(P)$ is the integer hull of $P$. 

Schrijver~\cite{Schrijver80} showed that the CG rank is finite for any rational polyhedron $P$. The paper~\cite{GentileVW06} showed that the rank is finite for polytopes, even when restricting to the subclass of mod-2 CG cuts. Ch\'{v}atal, Cook, and Hartmann~\cite{ChvatalCH89} proved lower bounds on the CG rank for natural formulations of various combinatorial problems.
It was shown in~\cite{cook1990cutting} that for polyhedra with no integer points the CG rank can be upper bounded by a function of the dimension of the ambient space. For polyhedra in $[0, 1]^n$, Eisenbrand and Schulz showed that the CG rank is always upper bounded by $\mathcal{O}(n^2 \log n)$. Recently, Rothvo{\ss} and Sanit{\'{a}}~\cite{RothvossS17}, showed that there are binary knapsack instances with CG rank $\Omega(n^2)$. Also see \cite{PokuttaS11,pokutta2010rank}. \red{The paper~\cite{cornuejols2016some} provides conditions for a binary IP, in terms of the subgraph induced by the infeasible 0/1  vertices in the skeleton graph of the 0/1-cube, that guarantee small CG rank. See~\cite{benchetrit2018characterizing} for generalizations that characterize polytopes in the 0/1-cube with bounded CG Rank.}

	The L\&P closure~\cite{balas:ce:co:93}, defined only for binary MILPs, has rank at most $n$, the number of binary variables; since this closure is weaker than the GMIC, MIR, and split closures, this yields the same rank upper bound for these cuts on binary MILPs. Moreover, these results are tight, since~\cite{cornuejols:li:2002} gives an example of a binary MILP for which the rank of $n$ is achieved for the split closure. In contrast, for general MILPs~\cite{cook:ka:sc:1990} showed that the rank with respect to split cuts may not be finite, although the $i^{th}$ split closure converges to the integer hull in the limit as $i$ tends to infinity~\cite{Owen2001,del2012convergence}. 

Since the $N_{+}$ operator is also stronger than the L\&P operator, it also has a rank upper bound of $n$ for binary MILPs. Cook and Dash~\cite{cook2001matrix} show that this rank $n$ can be achieved even when $N_{+}$ is combined with CG cuts. See~\cite{kurpisz2015hardest} for a characterization of 0/1 instances that have rank $n$ for the Lasserre hierarchy. Au and Tuncel~\cite{au2016comprehensive,au2016elementary} prove a comprehensive set of results on the rank of the $N_{+}$, BZ, and Lasserre closures and extensions. Also see~\cite{cheung2007computation} for related results.

The lattice-free cut approach and the subbadditive dual approach can be viewed as ``dual'' of each other, see ~\cite{deywolseysfree,conforti2014cut} for concrete results along these lines. Indeed the GMIC cut is the subadditive functional version of the geometric cut from a split set. Therefore, it is expected that even for general MILPs lattice-free cuts should have finite rank, since subadditive functions are known to yield the convex hull.
 An \emph{integral lattice-free polyhedron} is a lattice-free polyhedron whose vertices are integral; \red{the collection of these lattice-free sets is a proper subset of all possible lattice-free convex sets.} Del Pia and Weismantel~\cite{del2012convergence} prove that the rank of general MILPs with respect to integral lattice-free disjunctive cuts is finite. 

\red{If the integrality gaps of an instance is high, we expect that it is difficult to solve this instance, and in particular we expect that more rounds of cutting-planes are needed to obtain the integer hull. The papers~\cite{pokutta2011200,Bodur2017,bodur2017lower}, present lower bounds on rank of CG, split cuts, lattice-free cuts, and closures of cut from multiple-constraint relaxations that increase as the integrality gap increases for packing and covering integer programs (see definition below). }

One measure to compare a cut against split cuts is the so-called \emph{split rank}. This is defined to be the smallest integer $i$ such that the cut under study is valid for the $i^{th}$ split closure. The paper~\cite{dey:lowerbnd:2009} gave a general construction of lower bounds on the split rank of intersection cuts. The papers~\cite{DeyL11,BasuCM12} analyze the split rank of the lattice-free cuts. \red{In fact, given any family of lattice-free convex cuts, one can define the rank with respect to this family. The paper~\cite{del2012rank} characterizes when this rank is finite.}

See~\cite{singh2010improving} for examples showing when low rank CG cuts close the integrality gap significantly and compare these CG cuts to the performance of RLT hierarchy. 


\vspace{6pt}
\noindent\textbf{Blow-up measure.} One can develop a more refined measure of strength than closure inclusion for packing and covering sets. We say that $P \subseteq \R^n_+$ is a \emph{covering set} if it is upwards closed, namely if $x \in P$ and $y \ge x$, then $y \in P$. Given two covering sets $Q \supseteq  P$ (so $Q$ is a relaxation of $P$), we say that $Q$ is an \emph{$\alpha$-approximation} of $P$ if $P \supseteq \alpha Q := \{\alpha x \mid x \in Q\}$, i.e., pushing the bigger set up by an $\alpha$ factor makes it be contained in the smaller set. Equivalently, $Q$ is an $\alpha$-approximation of $P$ if for all nonnegative objective functions $c \in \R^n_+$ 
	\begin{align}
		\min\{c^\top x \mid x \in Q\} \ge \frac{1}{\alpha} \cdot \min\{c^\top x \mid x \in P\}, \label{eq:blowupIntro} 
	\end{align}
	see~\cite{Goe95} and Lemma 23 of~\cite{molinaro2013understanding}.	This measure can be defined for \emph{packing sets} in an analogous way, see Section \ref{sec:agg1}.
	
	While originally defined in this context for comparing inequalities for the Traveling Salesman Problem~\cite{Goe95}, this measure prominently appears in the analysis of inequalities for the \emph{continuous group relaxation}, namely MILPs defined by $k$ equations with $k$ free integer variables and nonnegative continuous variables (whose projection onto the continuous variables is a covering set). For the case $k=2$, it is known that all facet-defining inequalities are \red{geometric cuts from three types of lattice-free sets}: split, \emph{lattice-free triangle}, or \emph{lattice-free quadrilateral} inequalities. The paper~\cite{basu:bo:co:ma:2009} shows that the triangle closure (and the quadrilateral closure) is a $2$-approximation of the integer hull, while the split closure can be arbitrarily weak under this measure. See~\cite{AwateCGT15} for improved bounds. However, several computational experiments~\cite{espinoza:2008,basu:bo:co:ma:2010,dey:lo:wo:tr:2010,dash2014computational,louveaux2014algorithm} have indicated that triangle and quadrilateral cuts do not work as well as split cuts in practice. 
In order to better understand this phenomenon, the paper~\cite{basu2011probabilistic} analyzed uniformly generated MILPs and showed that with high probability both split and triangle closures provide very good approximations for the integer hull, thus triangle cuts do not add much over split cuts. Also see~\cite{PiaWW11,he2011probabilistic}.

	This measure has also been used to analyze the related \emph{corner relaxation}. For example, Averkov, Basu, and Paat~\cite{averkov2017approximation} recently showed that for each natural number $n$, a corner polyhedron with $n$ basic integer variables and an arbitrary number of continuous non-basic variables is approximated up to a constant factor by intersection cuts from lattice-free sets with at most $i$ facets if $i > 2^{n -1}$, and that no such approximation is possible if $i \leq 2^{n-1}$.




\vspace{6pt}
\noindent \textbf{Minimality, extremality, and facetness.} Unlike the above measures, these attempt to measure the strength of \emph{individual} cuts. Given a polyhedron $P \subseteq \R^n$, a valid inequality $a^\top x \le b$ is \emph{minimal} if there is no other valid inequality $c^\top x \le d$ that strictly dominates it, i.e., $\{x \mid c^\top x \le d\} \subsetneq \{x \mid a^\top x \le b\}$. An inequality is \emph{facet-defining} (or extreme) if it cannot be strictly dominated by a positive combination of other valid inequalities. Notice that facet-defining inequalities are a subset of minimal inequalities, which are a subset of all valid inequalities, and it is well-known that facet-defining inequalities are necessary and sufficient to describe a polyhedron; thus, these give a hierarchy of importance of inequalities. 

	There is a wealth of results on facet-defining inequalities for specific combinatorial optimization problems, and their use has led to vast computational improvements in the 80s and 90s. These notions are also heavily used in the study of the group relaxations. Gomory and Johnson~\cite{gomory:jo:1972a} characterize the minimal inequalities for the infinite group relaxation in terms of subadditive functions. Also see~\cite{koppe2017notions}. For the infinite group relaxation with 1 equation,~\cite{gomory:jo:1972a} also provides a sufficient condition for an inequality to be extreme, the so-called 2-Slope Theorem~\cite{gomory:jo:1972b}. This was later generalized to an arbitrary number of equations~\cite{Cornuejols2013,basu2013k+1}. \textcolor{purple}
For the case of 2 equations, Basu et al.~\cite{basu2014equivariant,BasuEqui2017} provide algorithms for testing whether a minimal valid function is extreme. 
	
	Note that a polyhedron $P$ has a finite number of facet-defining inequalities and, excluding degenerate cases, infinitely many minimal inequalities; thus, restricting to facet-defining inequalities really narrows down the inequalities under consideration. However, interestingly, for the infinite group relaxation (which is infinite dimensional), it was recently proved that the facet-defining inequalities form a dense set within the set of minimal inequalities~\cite{basu2016minimal,lebair}. This casts doubts on whether being a facet-defining inequality is really useful in this context. See the surveys~\cite{basu2016light1,basu2016light2} for a comprehensive list of results on the infinite group relaxation. 	



 
}


\subsection{\emph{Cutting-plane selection:} the need to ask different questions}\label{sec:motivate}
As described in the sections above, many different classes of general-purpose cutting-planes are known today and a wealth of results have been obtained about their strengths. However, when it comes to cutting-plane use and selection our scientific understanding is far from complete. \red{Indeed, the seminal work of Balas, Ceria, Cornuejols, Natraj~\cite{balas:ce:co:na:96} were the first to obtain good results using the same GMI cuts that were already available in the 60s and tested in multiple papers.}
In order to better appreciate 
the design challenges in implementing cutting-planes, a quote from~\cite{andreello2007embedding} is fitting: 
\begin{quote}``Branch-and-cut designers often have to face an Hamletic question: to cut or not to cut?''\end{quote} Indeed, adding cuts in a na\"ive way could well reduce the number of branching nodes, but the overall computing time may increase dramatically, for example, due to an increase in time to solve the LPs in the branch-and-bound tree. \red{Extreme care is needed regarding how cuts are added. To quote~\cite{balas:ce:co:na:96}, one of the reason of their success is 
\begin{quote}
``The cuts are used selectively, i.e., not all cuts from the pool are part of the formulation solved at every
iteration. This helps us avoid the "clogging" of the linear programs with constraints that are locally
useless."
\end{quote}}

Modern solvers break down the problem of cut selection in two steps. In the Step 1, they maintain a list of valid cutting-planes called \emph{cut-pool} that contains cuts that separate the current fractional point, together with all previously non-discarded cuts (but not added to LP)~\cite{andreello2007embedding,achterberg2009scip}. Then in Step 2 the solver ranks the cuts from this pool and adds a few of them to the LP. As we illustrate next, several issues need to be considered in this selection process. We also point out why unfortunately the traditional analyses of strength of cuts offer only limited help in understanding and addressing these issues.
%
\vspace{\myspace} \newline \newline
\textbf{Measuring the effectiveness of cutting-planes.} While most theoretical analyses focus on the strength of a whole family of cuts, it is clear that solvers cannot add all of these cuts. Thus, it is important to have measures of the potential effectiveness of individual (or a small set of) cuts as a guide for selection. A common measure of effectiveness of a cut $\alpha^{\top}x \leq \beta$ separating a fractional point $x^{*}$ is the \emph{depth-of-cut}, i.e., $\frac{\alpha^{\top} x^{*} - \beta}{||\alpha||_2}.$ This measure has its drawbacks, and while counter-intuitive, it may rank higher weaker cuts: suppose the problem has non-negativity constraints and $x^*_i = 0$; to obtain larger depth-of-cut one should try to set $\alpha_i = 0$, while the same cut with $\alpha_i > 0$ dominates the latter~\cite{andreello2007embedding} (see~\cite{amaldi2014coordinated} for concrete examples).   
Another potential measure is the volume cut off of a given LP~\cite{gomory:jo:2003}. 
The recent paper~\cite{basu2017optimal} proves that, for the group relaxation, the depth-of-cut measure may select very weak cuts, while a volume measure is able to single out GMICs, arguably one of the strongest cuts in practice, as the best one. It would be very interesting to better understand cuts with large volume measure and to be able to separate them efficiently. 
\red{Also see~\cite{lodiCGCounting} for an efficiency measure based on counting integer points, and~\cite{wesselmann} for other measures of efficiency.}
\vspace{\myspace} \newline\newline
\textbf{Cuts from multiple \red{sources}.} Many solvers walk on the optimal face of the LP relaxation and collect cuts from different bases. (These multiple bases can also be helpful for primal heuristics.) One inspiration for this is how Gomory's finite cutting-plane algorithm makes progress by incrementally cutting off the optimal face~\cite{tobiasMIPTalk,zanette2011lexicography,tobiasAussoisTalk}. This may also help with obtaining stronger cuts; for example, the paper~\cite{cornuejols2012tight} shows how different corner relaxations for the Stable Set problem have very different strengths. Moreover, it is known that the split closure can be obtained from generating split cuts from multiple bases~\cite{Andersen05} (see~\cite{balas:pe:2002,dash2010heuristic,fischetti2011relax} for computational uses of this idea). \red{The paper~\cite{balas:ce:co:na:96} also shows how cuts from different parts of a branch and bound tree can be used in a different part of the branch-and-bound tree for binary MILPs.} However, access to multiple \red{sources of cuts} leads to many questions: Should the cuts selected separate all the known optimal bases? What happens if none of the cuts separates all the known bases? How to adapt the measures of effectiveness to incorporate information of two or more fractional points?
\vspace{\myspace} \newline\newline
\textbf{Parallelism between cuts/objective function.} It has been observed that it is very important to use ``cuts that improve the polyhedron in diverse directions''~\cite{BCC96}.
The dot-product between the normalized values of left-hand sides of two cuts is usually used as measure of parallelism.  See the papers~\cite{andreello2007embedding,achterberg2009scip} on how the performance of cutting-plane non-trivially depends on the setting of this parameter. The paper~\cite{amaldi2014coordinated} presents cut generation explicitly optimizing also for parallelism using a bilevel optimization strategy. The paper~\cite{achterberg2009scip} indicates that considering parallelism with respect to the objective function (favoring cuts in similar direction) may be ineffective. \red{See also~\cite{coniglio} for an MILP-based strategy for adding in a coordinated way the set of $k$ cuts that provides the largest improvement to the relaxation's objective value.} 
To the best of our knowledge, there are no theoretical results/understanding pertaining to parallelism.
\vspace{\myspace} \newline \newline 
\textbf{How many cuts to add.} A natural design parameter is the decision on how many cuts to add in each round. \red{The importance of this parameter is gauged by the success of the the results in~\cite{balas:ce:co:na:96}. In particular, \cite{balas:ce:co:na:96} prescribed adding a large number of GMIC simultaneously obtained from different rows of a simplex tableau corresponding to fractional basic variables. Previous to~\cite{balas:ce:co:na:96}, most implementations added one cut at a time, which lead to very poor convergence. Also~\cite{balas:ce:co:na:96} observes that adding multiple cuts in rounds leads to less parallel cuts.} Some options used are setting a fixed upper bound on the maximum number to be generated~\cite{tobiasThesis} and making this upper bound a fixed parameter times the number of constraints in the original formulation~\cite{andreello2007embedding}. Whether these are the best ways to decide on the number of cuts to be added is not clear. For example, one may expect the number of integer variables to play a role, but to the best of our knowledge no such study has been conducted.  
\vspace{\myspace} \newline\newline
\textbf{\red{Whether to use facet-defining or simply minimal inequalities of a relaxation.}} Given a relaxation whose valid inequalities will be used as cutting-planes, we may choose to restrict our search for cutting-planes to only facet-defining inequalities or use any valid inequality. If we do not use only facet-defining inequalities of the relaxation, a natural option is to use a cut measure, such as violation, for separating valid inequalities from it. For some families of cutting-planes, such as L\&P, it is possible to write the separation problem as an LP. An important design choice is the selection of normalization used in the resulting LP; see for example~\cite{bonami2005using,balas2007new,chen2012computational}. In a recent paper, Wolsey and Conforti~\cite{conforti2016facet} address the problem of efficiently finding an inequality that is violated and either defines an improper face or a facet by the proper use of normalization. They also provide some evidence that this method works on structured and unstructured problems. \red{Cadoux~\cite{Cadoux2010} proposes a projection-based method for generating a set of inequalities that are facet defining for the relaxation and that together dominate the separating inequality with largest depth-of-cut (see also~\cite{CornuejolsLemarechal}).}

\red{As mentioned above}, other recent papers~\cite{basu2016minimal,lebair} considered the question of separating faces versus facet-defining inequalities in the context of the (infinite) group relaxation. Surprisingly, it was shown that for any minimal inequality, there exists an extreme inequality that approximates the minimal inequality as closely as desired. Thus, in this context separating faces is effectively ``equal to'' separating facets.
While some special cases are understood, many open problems remain regarding the selection of cuts from relaxations.
\vspace{\myspace} \newline\newline
\textbf{Cut sparsity.} It is well established that LP solvers perform much better with sparse formulations. Therefore, it is important to add sparser cutting-planes to maintain sparsity of the LP. We expand on the discussion pertaining to sparsity in Section~\ref{sec:sparsity}. 
\vspace{\myspace} \newline\newline
\textbf{Numerical properties, validity, and stability.} There are two main issues related to numerical properties of cuts. The first is that of validity: because most implementations use finite-precision arithmetic, round off errors may induce the generation and use of invalid cuts~\cite{margotSafe,margotGerardSafe}. This has motivated the line of research on \emph{numerically safe computations} in integer programs; see for example~\cite{cookSafe} for the numerically safe MIR/GMI cuts. An alternative method is to have solvers using exact rational arithmetic~\cite{Cook2013,SCIP5}. The second issue is that of numerical stability (e.g., ill conditioning of LPs), which is particularly salient when multiple rounds of cuts are generated~\cite{zanette2011lexicography}. To avoid this, typically cuts with large \emph{dynamism}, i.e., ratio of largest to smallest absolute value of coefficients, are discarded~\cite{fischetti2011relax}. Perhaps more interestingly, there have been a few proposed methods for generating cuts in a branch-and-cut framework to mitigate this issue, such as the use of rank-1 cuts~\cite{dash2010heuristic,Bonami2012}, the \emph{relax-and-cut} framework~\cite{fischetti2011relax}, using a simplified version of the lexicographic simplex~\cite{zanette2011lexicography}, and the use of \emph{generalized intersection cuts}~\cite{Balas2013}. More research would be welcome to better understand the source of instability introduced by cut generation and how to avoid it in an efficient way.
\vspace{\myspace} \newline\newline
\textbf{Strength of cutting-planes with respect to problem being solved.
} 
It is well-known that clique cuts are extremely useful for the stable set IPs, and flow covers are extremely important for certain fixed-charge network flow sub-structures~\cite{AchterbergR10}. However, very few results of this nature are known, i.e., that connect the performance of a particular general purpose cut to broad sub-classes of problems. Indeed, much of the results on the strength of cutting-planes, as discussed in Section \ref{sec:cutstr}, is via worst-case results.
Such results only provide a coarse indication of which families of cuts seem to be strong, while not indicating which class of cutting-planes is good (or, equally importantly, bad) for a given class of instances. 
Recently, the notion of reverse rank was introduced and studied (for CG and split cuts)~\cite{ConfortiPSFGCG15,ConfortiPSFSpli15} which connects instances to cut strength. In particular, the question asked is the following: for which problems, can there exist an LP relaxation with arbitrarily bad rank for a given class of cuts? While this is an important first step, many questions in this direction remain to be explored. 

{\color{black}
\vspace{6pt}
\noindent \textbf{Balancing the different goals.} The points above illustrate that many, possibly conflicting, goals should be taken into account when generating cuts; the question is then how to balance these goals. Overall, the common strategy is to: 1) first generate a collection of violated cuts from different families using heuristics that typically aim at obtaining cuts with maximum effectiveness; 2) discard cuts whose performance on one of the goals (e.g., parallelism) is worse than a given  (possibly dynamic) amount (possibly skipping this step for cuts with particularly high ``quality''); 3) rank the remaining cuts using a linear combination of the different goals, and add the top cuts~\cite{andreello2007embedding,achterberg2009scip,wesselmann}. To the best of our knowledge, there are very few works that during cut generation explicitly attempt to optimize on more than one goal simultaneously. Here we can cite~\cite{amaldi2014coordinated}, which simultaneously optimizes efficiency and parallelism by solving a bilevel optimization problem, and~\cite{Cadoux2010} (mentioned above) that generates cuts that are facets for a relaxation and also dominate the separating cut with largest depth-of-cut.
}

\vspace{8pt}
We hope that the above discussion illustrates that there are many important issues pertaining to cut generation and selection, and also several exciting directions that can lead to a better understanding of cutting-plane selection. In the next two sections we review some lines of work that we undertook to provide a preliminary theoretical underpinning for some of the issues appearing in cut-selection discussed above, namely sparsity in Section~\ref{sec:sparsity} and use of knapsack relaxation cuts for packing and covering instances (i.e. connecting some family of cut to important sub-family of instances) in Section~\ref{sec:agg}.


\section{Sparsity}\label{sec:sparsity}
	Sparsity is a very important topic in all areas of scientific computing. Consider the following statistics: \emph{The average number (median) of {non-zero entries in the constraint matrix} of MIPLIB 2010~\cite{MIPLIB2010} instances is {\textbf{$1.63\%$}} ({\textbf{$0.17\%$}}).
}

Sparse LPs can be solved efficiently~\cite[Chapter~3]{Coleman84}~\cite{pggthesis,Gill:mu:sa:wr}.  An IP solver typically solves a large number of LPs in a \emph{branch-and-bound tree} to solve one IP instance. Therefore, clearly one of the biggest benefits of sparsity for state-of-the-art IP solvers is the sparsity of the underlying LP. In a very revealing study~\cite{Walter14}, the authors conducted the following experiment: They added a very dense valid equality constraint to a few constraints of the LP relaxation at each node while solving IP instances from MIPLIB using CPLEX. This does not change the underlying polyhedron, but makes the constraints dense. They observed approximately $25\%$ increase in time to solve the instances if $9$ constraints were made artificially dense.
	
In order to keep the underlying LP sparse, most commercial MILPs solvers consider sparsity of cuts as an important criterion for cutting-plane selection and use. The use of sparse cutting-planes may be viewed as a compromise between two competing objectives. As discussed above, on the one hand, the use of sparse cutting-planes aids in solving the linear programs encountered in the branch-$\&$-bound tree faster. On the other hand, it is possible that `important' facet-defining or valid inequalities for the convex hull of the feasible solutions are dense and thus without adding these cuts, one may not be able to attain significant integrality gap closure. This may lead to a larger branch-$\&$-bound tree and thus result in the solution time to increase. 

It is challenging to simultaneously study both the competing objectives in relation to cutting-plane sparsity. Therefore, a first approach to understanding usage of sparse cutting-planes is the following:  How much do we lose in the closure of integrality gap if we only use (some of the) sparse cuts (as against dense cuts)?


\subsection{Using all sparse cuts}
	
The paper~\cite{deyMolinaroWang:2015} considers the following starting point for understanding better this issue: \emph{If we are able to separate and use \textbf{all} valid inequalities with a given level of sparsity how much does this cost in terms of loss in closure of integrality gap?} 

{Considered more abstractly, consider a polytope $P$ contained in the $[0,\ 1]^n$ hypercube, representing the integer hull of an integer program. (The assumption $P \subseteq [0,1]^n$ is WLOG and just a normalization.)} A cut $a x \le b$ is called \emph{$k$-sparse} if the vector $a$ has at most $k$ nonzero components. For the polytope $P$, define $P^k$ as the best outer-approximation obtained from $k$-sparse cuts, that is, it is the intersection of all $k$-sparse cuts valid for $P$. Consider the following natural measure of quality of approximation of $P$ by sparse inequalities: $\dist(P, P^k):= \max_{x \in P^k} \left(\textup{min}_{y \in P} \| x - y\|\right)$, where $\| \cdot \|$ is the $\ell_2$ norm. Note that $\dist(P, P^k)$ is an upper bound on the depth of cut measure discussed in Section~\ref{sec:motivate} when the left-hand-side of the cut is normalized to be a unit vector. The largest distance between any two points in the $[0,\ 1 ]^n$ hypercube is at most $\sqrt{n}$. Therefore we will compare the value of $\dist(P, P^k)$ to $\sqrt{n}$.

We were able to obtain the following result in~\cite{deyMolinaroWang:2015} to understand how well sparse cuts approximate the actual shape of the integer hull.

\begin{theorem}[Upper Bound on $\dist(P, P^k)$]\label{thm:upperall}
Let $n \geq 2$. Let $P \subseteq [0, 1]^n$ be the convex hull of points $\{p^1, \dots, p^t\}$. Then \red{$$\dist(P, P^k) \leq \textup{min}\left\{ 8\frac{\sqrt{n}}{\sqrt{k}}\sqrt{2\log 4 tn}, 2\sqrt{n}\left(\frac{n}{k} -1\right) \right\}. $$}
\end{theorem}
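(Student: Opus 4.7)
The plan is to prove the two terms inside the minimum separately, by two different constructions.

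For the second term, $2\sqrt n\,(n/k-1)$, I would use an averaging argument producing a point of $P$ close to $x$. Fix $x\in P^k$; the key observation is that for every $k$-subset $T\subseteq[n]$ the projection $x_T$ lies in $\text{proj}_T(P) = \text{conv}(p^1_T,\ldots,p^t_T)$, since otherwise a separating hyperplane for $x_T$ in $\R^T$, padded with zeros outside $T$, would give a $k$-sparse cut valid for $P$ and violated by $x$, contradicting $x\in P^k$. Writing $x_T = \sum_i \lambda^T_i p^i_T$ as a convex combination, the witness $y^T := \sum_i \lambda^T_i p^i\in P$ agrees with $x$ on $T$. Letting $T$ be uniform over $k$-subsets, $\bar y := \E_T[y^T]\in P$ by convexity, and each coordinate satisfies $\bar y_j = (k/n)x_j + (1-k/n)z_j$ for some $z_j\in[0,1]$, so $|\bar y_j - x_j|\le 1-k/n$. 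This yields $\|\bar y - x\|_2 \le \sqrt n\,(1-k/n)$, which one checks is at most $2\sqrt n(n/k-1)$ for $1\le k\le n$.

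For the first term, $8\sqrt{n/k}\sqrt{2\log 4tn}$, I would argue by contrapositive: given $x\in[0,1]^n$ with $\dist(x,P)>\epsilon$, construct a $k$-sparse cut valid for $P$ but violated by $x$, certifying $x\notin P^k$. Take a separating hyperplane $a^\top z\le b$ with $\|a\|_2=1$ and $a^\top(x-p^i)\ge\epsilon$ for every vertex. Draw $T\subseteq[n]$ uniformly at random from the $k$-subsets and form the random cut with coefficients $\tilde a_j := (n/k)\,a_j\,\mathbf 1[j\in T]$ and right-hand side $\tilde b := \max_i \tilde a^\top p^i$; this is $k$-sparse and valid for $P$ by construction, and is violated by $x$ exactly when $\min_i \tilde a^\top(x-p^i)>0$. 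For each fixed vertex $i$, $\tilde a^\top(x-p^i)$ is a rescaled random-subset sum with expectation $a^\top(x-p^i)\ge\epsilon$, and, using $\|a\|_2=1$ together with $\|x-p^i\|_\infty\le 1$, variance at most $(n/k)\sum_j a_j^2(x_j-p^i_j)^2\le n/k$ (the indicators $\mathbf 1[j\in T]$ are negatively correlated under uniform subset sampling, so Bernstein's inequality applies at least as strongly as under independent Bernoulli$(k/n)$ sampling). A Bernstein tail bound together with a union bound over the $t$ vertices puts the desired event in the positive-probability regime once $\epsilon = \Omega(\sqrt{(n/k)\log t})$; the extra $\log n$ factor is absorbed by a secondary union bound, for instance to control $|T|$ if one uses independent Bernoulli sampling to streamline concentration, or over a polynomial-sized net of candidate separating normals.

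The main technical obstacle is this concentration step. First, one must obtain the correct variance bound $n/k$, rather than the crude $n$ that would follow from a Hoeffding-style argument using only $\ell_\infty$-norms; this is what gives the $\sqrt{n/k}$ scaling in the theorem. Second, since $\epsilon$ can be as large as $\sqrt n$ inside $[0,1]^n$, Bernstein produces two regimes: the variance-dominated regime yielding the $\sqrt{(n/k)\log tn}$ rate, and the linear Bernstein regime yielding only the weaker $(n/k)\log tn$ rate. The saving grace is that precisely where the probabilistic bound degrades, the averaging bound $2\sqrt n(n/k-1)$ has already become smaller, so the minimum in the theorem statement matches the claimed rate throughout. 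Carefully tracking this interplay, together with the constants in Bernstein for sampling without replacement, is the crux of the argument.
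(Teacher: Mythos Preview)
Your treatment of the second term is correct and is actually sharper than what the theorem claims: the averaging argument gives $\sqrt{n}(1-k/n)$, which is at most $2\sqrt{n}(n/k-1)$. The paper's proof sketch only addresses the first term, so this part is entirely your own and is fine.

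For the first term, your overall architecture---take a separating hyperplane, randomly sparsify it, use Bernstein plus a union bound over the $t$ vertices---is exactly the paper's. The gap is in the sparsification scheme. With uniform $k$-subset sampling, each centred summand in $\tilde a^\top(x-p^i)-a^\top(x-p^i)$ is bounded in magnitude by $(n/k)\,|a_j|$, so the Bernstein range parameter is $M=(n/k)\|a\|_\infty$. Since the separating normal $a$ is dictated by $P$ and the point $x$, you have no control over $\|a\|_\infty$; it can be $\Theta(1)$. Then $\sigma^2/M=1/\|a\|_\infty$ can be $O(1)$, the linear Bernstein regime kicks in already for $\epsilon$ of order $1$, and the bound you get is $(n/k)\log(tn)$ rather than $\sqrt{(n/k)\log(tn)}$. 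Your ``saving grace'' does not rescue this: the degradation is governed by $\|a\|_\infty$, not by $\epsilon$, and there are easy parameter regimes (e.g.\ $n=10^6$, $k=10^3$, $t=2$) where the first bound in the theorem is far smaller than both $(n/k)\log(tn)$ and the second bound, so the minimum you would prove is strictly weaker than the theorem's.

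The paper avoids this by \emph{importance sampling}: coordinate $j$ is retained with probability $\alpha|d_j|$ (where $\alpha=k/(2\sqrt n)$), and when retained is set to $\mathrm{sign}(d_j)/\alpha$. This makes every nonzero entry of the sparsified vector have the \emph{same} magnitude $1/\alpha=2\sqrt n/k$, so $M=O(\sqrt n/k)$ regardless of the shape of $d$. With $\sigma^2=O(n/k)$ as you computed, $\sigma^2/M=\Theta(\sqrt n)$, i.e.\ the variance-dominated regime holds for all deviations up to the diameter of the cube, and the $\sqrt{(n/k)\log(tn)}$ rate follows directly. Your uniform-sampling scheme can be patched by deterministically keeping the large-$|a_j|$ coordinates and uniformly sampling the rest, but carried to its conclusion that is precisely importance sampling.
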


In particular, consider polytopes with `few' vertices, say $n^q$  vertices for some constant $q$. Suppose we decide to use cutting-planes with half sparsity (i.e. $k = \frac{n}{2}$), a reasonable assumption in practice. Then plugging in these values, it is easily verified that {$\dist(P, P^k) \leq {16}\sqrt{(q + 1)\log n} \approx c \sqrt{\log n}$} for a constant $c$, which is a significantly small quantity in comparison to $\sqrt{n}$. In other words, \emph{if the number of vertices is small, independent of the location of the vertices, using half sparsity cutting-planes allows us to approximate the integer hull very well.} We believe that as the number of vertices increase, the structure of the polytope becomes more important in determining $\dist(\P, \P^k)$ and Theorem \ref{thm:upperall} only captures the worst-case scenario. 

In the paper~\cite{deyMolinaroWang:2015}, it was also shown that the bound presented in Theorem \ref{thm:upperall} is tight for random 0-1 polytopes.

The proof of Theorem \ref{thm:upperall} uses probabilistic techniques which we outline below.



	\begin{proof}[Proof sketch of Theorem \ref{thm:upperall}]
		Consider a polytope $\P = \conv\{p^1, p^2, \ldots, p^t\}$ in $[0,1]^n$. We only prove the first bound in the theorem, that is, we show that $\dist(\P, \P^k)$ is at most $4 \lambda^*$ for \red{$$\lambda^* = \frac{2n^{1/2}}{\sqrt{k}} \sqrt{2\log 4 t n}.$$} Equivalently, we show that every point at distance more than $4 \lambda^*$ away from $\P$ is cut off by a $k$-sparse inequality valid for $P$. Assume that $4 \lambda^*$ is at most $\sqrt{n}$, otherwise the result is trivial.
		
	Let $u \in \R^n$ be a point at distance more than $4 \lambda^*$ away from $\P$. Let $v$ be the closest point in $\P$ to $u$. We can write $u = v + \lambda d$ for some vector $d$ with $\|d\|_2 = 1$ and $\lambda > 4\lambda^*$. The inequality $dx \le dv$ is valid for $\P$, so in particular $d p^i \le dv$ for all $i \in [t]$. In addition, this inequality cuts off $u$: $du = dv + \lambda > dv$. The idea is to use this extra slack factor $\lambda$ in the previous equation to show we can sparsify the inequality $dx \le dv$ while maintaining separation of $\P$ and $u$. It then suffices to prove the following lemma.
		
		\begin{lemma} \label{lemma:existsSep}
			There is a vector $\tilde{d} \in \R^n$ such that: $\tilde{d}$ is $k$-sparse, $\tilde{d} p^i \le \tilde{d} v + \frac{\lambda}{2}$ for all $i \in [t]$, and $\tilde{d} u > \tilde{d} v + \frac{\lambda}{2}$.
		\end{lemma}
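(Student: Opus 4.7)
The plan is to construct $\tilde{d}$ as a randomized sparsification of $d$ and show that, with positive probability, it satisfies all three required properties simultaneously. Fix a sampling probability $q = k/(2n)$ and let $\xi_1,\dots,\xi_n$ be independent Bernoulli$(q)$ random variables; define $\tilde{d}_j := \frac{1}{q}\xi_j d_j$, so that $\E[\tilde{d}] = d$ and $\E[\tilde{d}x] = dx$ for every fixed $x$. The number of nonzero coordinates of $\tilde{d}$ is Binomial$(n, q)$ with mean $k/2$, so by a Chernoff bound the event $\|\tilde{d}\|_0 > k$ has probability exponentially small in $k$, which handles the sparsity requirement.

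For the validity condition, fix $i \in [t]$. Since $d(p^i - v) \le 0$, it suffices to show that the upper deviation of $\tilde{d}(p^i - v) = \frac{1}{q}\sum_j \xi_j d_j (p^i_j - v_j)$ from its mean is at most $\lambda/2$. Using $\|d\|_2 = 1$ and $\|p^i - v\|_\infty \le 1$, the total variance of this sum is bounded by $\frac{1}{q}\sum_j d_j^2 (p^i_j - v_j)^2 = O(n/k)$, and each summand is bounded in magnitude by $1/q = O(n/k)$. Bernstein's inequality then yields a tail bound of the form $\exp(-\Omega(\lambda^2 k/n))$ in the Gaussian regime, which is $o(1/t)$ precisely when $\lambda$ is of order $\sqrt{(n/k)\log(tn)}$, matching the target $\lambda > 4\lambda^*$.

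For the separation condition, $\tilde{d}(u - v) = \lambda\,\tilde{d}d$ has mean $\lambda\|d\|_2^2 = \lambda$, so it is enough to argue $\tilde{d}d > 1/2$ with high probability. Applying Bernstein to $\tilde{d}d = \frac{1}{q}\sum_j \xi_j d_j^2$ gives the required concentration whenever $d$ is reasonably ``flat''. The main obstacle is precisely when $d$ is ``spiky'': if most of its $\ell_2$-mass is concentrated on very few coordinates, then random subsampling misses those coordinates with overwhelming probability and $\tilde{d}d$ ends up close to $0$ rather than $1$, so the naive construction fails.

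The proposed fix is a hybrid sparsification. Let $T$ be the set of the $k/2$ largest-magnitude coordinates of $d$; keep $\tilde{d}_j = d_j$ deterministically for $j \in T$, and apply the random sparsification only to the complementary coordinates. Since $\sum_{j \in T} d_j^2 \le 1$ forces $|d_j|^2 \le 2/k$ for every $j \notin T$, the residual is flat enough that Bernstein gives the desired concentration for both the validity and separation steps, with the right dependence on $n/k$ and $\log(tn)$. A union bound over the $t + 2$ bad events --- sparsity, the $t$ validity events, and the separation event --- then shows that the simultaneous success probability is strictly positive, establishing the existence of the required $\tilde{d}$. The technical bulk of the proof will be balancing the deterministic and random parts and tracking the absolute constants so they match the target bound $8\sqrt{n/k}\sqrt{2\log 4tn}$.
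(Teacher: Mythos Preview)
Your high-level plan (random sparsification, concentration, union bound) matches the paper, but the specific randomization you chose does not work across the full range of $k$, and the hybrid fix does not close the gap.

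The issue is the size of individual summands in Bernstein's inequality. With your scheme a kept coordinate carries value $d_j/q$, and even after the hybrid restricts the random part to $|d_j|\le\sqrt{2/k}$, each term in $(\tilde d-d)(p^i-v)$ can be as large as $(1/q)\sqrt{2/k}=\Theta(n/k^{3/2})$. With variance $\Theta(n/k)$, Bernstein leaves the Gaussian regime as soon as the target deviation exceeds a constant times $\sqrt{k}$; the validity tail then degrades to $\exp\bigl(-\Theta(k\sqrt{\log tn}/\sqrt n)\bigr)$, which cannot survive the union bound over $t$ vertices when $k=o(\sqrt{n\log tn})$. Since the theorem is already nontrivial for $k$ of order $\log(tn)$, this leaves a real hole. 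The same defect hits your separation step: Bernstein on $\tilde d d$ yields only $\exp(-\Omega(k^2/n))$, useless for $k=o(\sqrt n)$. The paper uses a different sparsifier: sample coordinate $j$ with probability $\alpha|d_j|$, where $\alpha=k/(2\sqrt n)$, and if kept set the value to $\sign(d_j)/\alpha$. Every nonzero entry then has the \emph{same} magnitude $2\sqrt n/k$, so the Bernstein range term is $O(\sqrt n/k)$ rather than $O(n/k^{3/2})$ and the Gaussian tail applies throughout; spiky coordinates are also automatically favored (large $|d_j|$ means large sampling probability), so no explicit top-$k/2$ split is needed. For separation the paper abandons Bernstein entirely: the product $\tilde d d$ becomes a nonnegative bounded sum with mean $1$, so Markov's inequality already gives $\Pr(\tilde d d>1/2)\ge\Omega(1/n)$, which narrowly beats the $O(1/n)$ combined failure probability of sparsity and validity and closes the union bound.
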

		
		To prove the lemma we construct a random vector $\tilde{\D} \in \R^n$ satisfying these three properties simultaneously with non-zero probability. Let $\alpha = \frac{k}{2 \sqrt{n}}$. \emph{Assume for simplicity that no coordinate of $d$ is too big, that is, $\alpha |d_i| \le 1$ for all $i$; bigger coordinates can be handled with simple modifications of the proof.} Then define $\tilde{\D}$ as the random vector with independent coordinates where $\tilde{\D}_i$ takes value $\sign(d_i)/\alpha$ with probability $\alpha |d_i|$ and takes value $0$ with probability $1 - \alpha |d_i|$. (For convenience we define $\sign(0) = 1$.) 
		
		\paragraph{Property 1:} \emph{The probability that $\tilde{\D}$ is not $k$-sparse is at most $\frac{1}{4n}$.} Note that this vector is sparse in \emph{expectation}, since the expected number of non-zero coordinates is $\alpha \|d\|_1 \le k/2$. A simple calculation also upper bound the variance $\Var(\textrm{\# of non-zero coords. of $\tilde{\D}$}) \le \frac{k}{2}.$ Since the coordinates of this vector are independent, by the Central Limit Theorem we expect it has approximately $\frac{k}{2} \pm \sqrt{\frac{k}{2}} \ll k$ non-zero coordinates. In fact, one can make this quantitative by using Bernstein's Inequality \cite{concentration}, obtaining that the probability that $\tilde{\D}$ is not $k$-sparse is at most $\frac{1}{4n}$ (this uses the assumption $4 \lambda^* \le \sqrt{n}$).
		
		\paragraph{Property 2:} \emph{The probability that for some vertex $p^i$ we have $\tilde{\D} p^i \ge \tilde{\D} v + \frac{\lambda}{2}$ is at most $1/4n$.} Given the ``slack of $\lambda$'' (which is at least $4 \lambda^*$) mentioned before, it suffices to show that $\Pr(\max_{i \in [t]} (\tilde{\D} (p^i - v) - d(p^i - v)) \ge 2  \lambda^*)$ is at most $1/4n$.  Define the centered random variable $\bZ = \tilde{\D} - d$. Triangle inequality and the fact that $v \in \conv\{p^1,p^2,\ldots,p^n\}$ give that $\max_{i \in [t]} \bZ (p^i - v) \le 2 \max_{i \in [t]} |\bZ p^i|$; so we upper bound the probability that the right-hand side is larger than $\lambda^*$. 
		
		First, fix a single $i \in [t]$. Since $\bZ$ is centered we have $\E [\bZ p^i] = 0$. Simple calculations show that $$\Var(\bZ p^i) = \Var(\tilde{\D} p^i) \le \frac{\sqrt{n}}{\alpha} \le \frac{(\lambda^*)^2}{4 \log 4 tn}.$$ Then, again we expect that $\bZ p^i$ is approximately $0 \pm \sqrt{\Var(\bZ p^i)} \ll \lambda^*$, and using Bernstein's inequality one actually obtains $\Pr(|\bZ p^i| \ge \lambda^*) \le \frac{1}{4tn}.$
			
			This analysis only considered a single $i \in [t]$. But employing a union bound we can put all these bounds together and obtain that the probability that \emph{any} $i \in [t]$ has $|\bZ p^i| \ge \lambda^*$ is at most $\frac{t}{4tn} = \frac{1}{4n}$, and thus $\Pr(\max_{i \in [t]} |\bZ p^i| \ge \lambda^*) \le \frac{1}{4n}$ as desired.

		\paragraph{Property 3.} \emph{The probability that $\tilde{\D}u > \tilde{\D}v + \frac{\lambda}{2}$ is at most $\frac{1}{2n - 1}$.} Recalling $u - v = \lambda d$, it is equivalent to upper bound $\Pr(\tilde{\D}d \le 1/2)$. In expectation we have $\E[\tilde{\D}d] = dd = 1$, and simple calculations (using $4 \lambda^* \le \sqrt{n}$) show that for every scenario we have $\tilde{\D}d \le n$. Then employing Markov's inequality to the non-negative random variable $n - \tilde{\D}d$, we get $\Pr(\tilde{\D}d \le 1/2) \le 1 - \frac{1}{2n - 1}$.

		\medskip
		
		To prove Lemma \ref{lemma:existsSep}, simply employ a union bound to see that with positive probability $\tilde{\D}$ satisfies the three properties simultaneously.
	\end{proof}



One drawback of this result is that it considers a very stylized situation, since almost always some dense cutting-planes are used or one is interested in approximating the integer hull only along certain directions or we may be able to use reformulations. Therefore, the paper~\cite{dey2015some} studies the following questions: (i) Are there polytopes, where the quality of approximation by sparse inequalities cannot be significantly improved by adding polynomial (or even exponential) number of \emph{arbitrary} valid inequalities? (ii) Are there polytopes that are difficult to approximate under \emph{every} rotation? (iii) Are there polytopes that are difficult to approximate in \emph{almost all} directions using sparse inequalities? it was shown that the answer each of the above questions in the positive. This is perhaps not surprising: an indication that sparse inequalities do not always approximate integer hulls well even in the more realistic settings discussed above. 

\subsection{Sparse cuts for sparse formulations}\label{sec:sparseforsparse}
	
Another feature missing from the previous results is that they work on worst-case instances. However, as mentioned before, in practice most instances in practice have a \textbf{sparse LP formulation}. 
For one, it is reasonable to expect that sparse cut perform better on sparse formulations. Moreover, the sparsity structure of the formulation can serve as a guide to choosing the support of the which sparse cuts to use. 
	
As an extreme example, consider the feasible region of the following separable MILP:
\begin{eqnarray*}
\begin{array}{llcl}
A^1x^1& &\leq &b^1\\
&A^2x^2&\leq& b^2\\
x^1\in \mathbb{Z}^{p_1}\times \mathbb{R}^{q_1},&x^2\in \mathbb{Z}^{p_2}\times \mathbb{R}^{q_2}&\end{array}
\end{eqnarray*}
Since the constraints are completely disjoint in the $x^1$ and $x^2$ variables, the convex hull is obtained by adding valid inequalities in the support of the first $p_1 + q_1$ variables and another set of valid inequalities for the second $p_2 + q_2$ variables. Therefore, sparse cutting-planes, in the sense that their support is not on all the variables, are sufficient to obtain the convex hull. Now one would like to extend such a observation even if the constraints are not entirely decomposable, but ``loosely decomposable''. Indeed this is the hypothesis that is mentioned in the classical computational paper~\cite{crowder:jo:pa:1983}. This paper solves fairly large-scale 0-1 integer programs (up to a few thousand variables) within an hour in the early 80s, using various preprocessing techniques and the lifted cover inequalities within a cut-and-branch scheme. To quote from this paper:
\begin{quote}
``the inequalities that we generate preserve the sparsity of the constraint matrix.''
\end{quote}

	The paper \cite{deyMolinaroWang:2017} examines this interplay between formulation sparsity and sparse cuts. While the paper considers packing MILPs, covering MILPs, and a more general form of MILPs where the feasible region is arbitrary together with assumptions guaranteeing that the objective function value is non-negative, here we focus only on packing problems. So consider a packing problem of the form: 
	\begin{align}
\textup{max}~& c^Tx \notag \\
s.t. ~&Ax \leq b \notag \\
&x_j \in \mathbb{Z}_+, \forall j \in \mathcal{L}  \label{eq:packingSparse} \tag{P} \\
&x_j \in \mathbb{R}_+, \forall j \in [n]\backslash \mathcal{L}, \notag
\end{align}
with $A \in \mathbb{Q}_{+}^{m \times n}$, $b\in \mathbb{Q}_+^m$, $c \in \mathbb{Q}_+^{n}$ and $\mathcal{L} \subseteq [n]$.

	In order to formalize the notion of ``almost decomposable'', \cite{deyMolinaroWang:2017} uses an interaction graph of the columns of the constraint matrix $A$. 
	
	\begin{definition}[Packing interaction graph of A]\label{defn:packgraph}
Consider a matrix $A \in \mathbb{Q}^{m \times n}$. Let $\mathcal{J}:= \{J_1, J_2 \dots, J_q\}$ be a partition of the index set of columns of $A$ (that is $[n]$). We define the \emph{packing interaction graph} $\Gp = (V,E)$ as follows: (i) There is one node $v_j \in V$ for every part $J_j \in \mathcal{J}$. (ii) For all $v_i, v_j \in V$, there is an edge $(v_i, v_j) \in E$ if and only if there is a row in $A$ with non-zero entries in both parts $J_i$ and $J_j$, namely there are $k \in [m]$, $u \in J_i$ and $w \in J_j$ such that $A_{ku} \neq 0$ and $A_{kw} \neq 0$.
\end{definition}

Now we turn to the control over the sparsity structure of cuts one is willing to use. For that, \cite{deyMolinaroWang:2017} uses a list of allowed supports of the cuts and considers the result of adding all the valid cuts within these supports. 
	
	\begin{definition}[Column block-sparse closure]
Given the problem $\textup{(P)}$, let $\mathcal{J}:= \{J_1, J_2, \dots, J_q\}$ be a partition of the index set of columns of $A$ (that is $[n]$) and consider the packing interaction graph $\Gp = (V, E)$. With slight overload in notation, for a set of nodes $S \subseteq V$ we say that inequality $\alpha x \le \beta$ is a \emph{sparse cut on} $S$ if \red{its support is contained in $\bigcup_{v_j \in S} J_j$.}
The closure of these cuts is denoted by $P^{(S)} := P^{(\bigcup_{v_j \in S} J_j)}$. Given a collection $\mathcal{V}$ of subsets of the vertices $V$ (the \emph{support list}), we use $\Pp$ to denote the closure obtained by adding all sparse cuts on the sets in $\mathcal{V}$'s, namely $$\Pp := \bigcap_{S \in \mathcal{V}} P^{(S)}.$$
%
\end{definition}
	
\begin{example}[Two-stage stochastic packing program]\label{ex:2stage}
The deterministic equivalent of a two-stage stochastic  program with finitely many realizations of uncertain parameters in the second stage has the following form:
	\begin{align}
		\max ~~~~& \sum_{i = 0}^K (c^i)^T x^i\notag\\
	  s.t. ~~~~& A x^0 \le b \label{eq:stoch} \tag{2-SSP}\\
	    	     & A^i x^0 + B^i x^i \le b^i ~~~~~~~~\forall i \in [k]\notag\\
	    	     &x^0, \ldots, x^k \textrm{ integral}, \notag
	\end{align}
	where $x^0$ are the first stage variables and the $x^i$'s for $i \ge 1$ correspond to each realization in the second stage. Note that there are no constraints involving two different realizations.
	
It is natural to put all the first stage variables $x^0$ into one block and each of the second stage variables $x^i$ (for $i \ge 1$) into a separate block of variables. So we have a graph $\Gp$ with vertex set $\{v_0, v_1, \dots, v_{k}\}$ and edges $(v_0, v_1), (v_0, v_2), \dots, (v_0, v_k)$. 

Consider the cuts on the support of first stages variables together with the variables corresponding to one second stage realization, the so-called \emph{single-scenario cuts}. Such cutting-planes are well-studied, see for example~\cite{Boduretal,Caroephd,Sen2005,ZhangK14}. 
%
\end{example} 

	Intuitively, the ``fewer edges'' the graph $\Gp$ has, the more ``almost decomposable'' the blocks of variables $J_1, J_2 \dots, J_q$ are. However, it is also crucial to capture the \emph{interplay} between this graph structure and the supports of the sparse cuts to be used. For that, \cite{deyMolinaroWang:2017} defines the notion of \emph{mixed stable sets} and \emph{fractional mixed chromatic number} (denoted by $\eta^{\mathcal{V}}$) based on them (we refer to the paper for the exact definition). In the case the support list $\mathcal{V}$ consists of just the individual nodes of $\Gp$ these notions reduce to the standard stable sets and fractional chromatic number. 
	
	With these definitions at hand, the main result of \cite{deyMolinaroWang:2017} for packing problem is a bound on the strength of sparse cuts, \red{where the support of the sparse cuts is} in a given list $\mathcal{V}$, \red{such that the bound} \emph{solely depends on the fractional mixed chromatic number of $\Gp$}. In particular, it is independent of the dimension and the data of the problem. 
	
	\begin{theorem}\label{thm:packing}
	Consider a packing integer program as defined in \eqref{eq:packingSparse}. Let $\mathcal{J} \subseteq 2^{[n]}$ be a partition of the index
set of columns of $A$ and let $G = \Gp = (V,E)$ be the packing interaction graph of $A$. Then for any sparse cut support list $\mathcal{V} \subseteq 2^V$ we have
\begin{eqnarray*}
z^{\mathcal{V}, \pack} \leq \eta^{\mathcal{V}}(G) \cdot z^I,
\end{eqnarray*}
where $z^{\mathcal{V}, \pack}$ is the optimal value over the sparse closure $P^{\mathcal{V},\pack}$ and $z^I$ is the optimal value of the original integer program \eqref{eq:packingSparse}.  
\end{theorem}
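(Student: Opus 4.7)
\medskip
\noindent\textbf{Proof plan.} The plan is to use the classical fractional-chromatic-number style averaging, combined with the key observation that, in a packing problem with $A,b,c \ge 0$, zero-padding is always harmless. Let $x^{*} \in \Pp$ attain the objective value $z^{\mathcal{V},\pack}$, so $c^{\top}x^{*} = z^{\mathcal{V},\pack}$.

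First, I would establish the single-support statement: for every $S \in \mathcal{V}$, let $x^{S}$ be the vector that agrees with $x^{*}$ on the coordinates in $\bigcup_{v_{j}\in S} J_{j}$ and is $0$ elsewhere; then $x^{S}$ lies in the integer hull $P^{I}$ of \eqref{eq:packingSparse}. The argument is in two pieces. (i) Since $x^{*}$ satisfies every sparse cut on $S$, the projection $\Pi_{S}(x^{*})$ onto coordinates $\bigcup_{v_{j}\in S}J_{j}$ belongs to the projection $\Pi_{S}(P^{I})$: any valid inequality for $\Pi_{S}(P^{I})$ lifts to a valid inequality for $P^{I}$ whose support sits inside $\bigcup_{v_{j}\in S}J_{j}$, i.e.\ a sparse cut on $S$. (ii) Because $S$ is (a union of) stable sets from the support list, every row of $A$ touches at most the variables of one block of $S$, and the columns outside $\bigcup_{v_{j}\in S}J_{j}$ have nonnegative entries; hence padding any integer feasible point of the $S$-restricted problem with zeros gives an integer feasible point of the original problem. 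Convexifying, $\Pi_{S}(P^{I})$ padded with zeros is contained in $P^{I}$, so $x^{S}\in P^{I}$ and therefore $c^{\top}x^{S} \le z^{I}$.

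Second, I would invoke the fractional mixed chromatic number. By the definition of $\eta^{\mathcal{V}}(G)$ there exists a probability distribution $\mu$ over sets $S \in \mathcal{V}$ (more precisely, over the mixed stable sets generated by $\mathcal{V}$) such that for every vertex $v_{j} \in V$,
\[
\Pr_{S \sim \mu}\bigl[v_{j} \in S\bigr] \;\ge\; \frac{1}{\eta^{\mathcal{V}}(G)}.
\]
Consequently, for every coordinate $i \in [n]$ (which lies in a unique block $J_{j}$), we have $\Pr_{S\sim\mu}[\,i \in \bigcup_{v_{l}\in S} J_{l}\,] \ge 1/\eta^{\mathcal{V}}(G)$. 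Taking expectations and using $c, x^{*} \ge 0$,
\[
\mathbb{E}_{S \sim \mu}\!\bigl[c^{\top}x^{S}\bigr] \;=\; \sum_{i=1}^{n} c_{i}\, x^{*}_{i}\, \Pr_{S\sim\mu}\!\Bigl[i \in \textstyle\bigcup_{v_{l}\in S} J_{l}\Bigr] \;\ge\; \frac{1}{\eta^{\mathcal{V}}(G)}\, c^{\top}x^{*}.
\]
Combined with the pointwise bound $c^{\top}x^{S} \le z^{I}$ from the previous step, this yields $c^{\top}x^{*} \le \eta^{\mathcal{V}}(G)\cdot z^{I}$, which is the desired inequality.

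The main obstacle I anticipate is the first step, specifically getting the projection--and--padding argument to go through when the elements of $\mathcal{V}$ are not singletons. One has to verify that ``sparse cuts on $S$'' really certify membership in $\Pi_{S}(P^{I})$, which requires that no facet of $\Pi_{S}(P^{I})$ is missed by the family of integer-valid cuts supported on $\bigcup_{v_{j}\in S}J_{j}$; and one has to match this precisely with whatever notion of \emph{mixed stable set} underlies $\eta^{\mathcal{V}}$, so that the blocks indexed by $S$ are genuinely decoupled from the rest of the formulation in the packing sense. Once the decoupling-and-padding lemma is in place, the chromatic averaging is routine and the bound is independent of the dimension and the data, as claimed.
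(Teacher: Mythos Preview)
Your approach is correct and matches the paper's: the paper only proves the special case (Corollary~\ref{cor:packing}) to illustrate the method, and your projection--padding--chromatic-averaging argument is precisely the general form of that proof, with the probabilistic interpretation of $\eta^{\mathcal{V}}$ replacing the explicit $2$-coloring. One clarification worth making explicit: step~(i) as written is only literally valid for each $S_i \in \mathcal{V}$ composing the mixed stable set, not for the union $S$ (since $x^*$ need not satisfy cuts supported on all of $S$); the decoupling you invoke in step~(ii) is exactly what then lets you assemble the $x^{S_i}$'s into $x^S \in P^I$, just as the paper combines $(0,\tilde{x}^1,\ldots,\tilde{x}^k)$ using the stable-set structure.
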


	Since the definitions involved in this theorem are a mouthful, we consider the more concrete case \red{of} the 2-stage stochastic packing program \eqref{eq:stoch}. Consider graph $\Gp$ defined as in Example \ref{ex:2stage}, and consider the support list $\mathcal{V}$ consisting of just the individual nodes of $\Gp$. In this case, only $x^i$-cuts are allowed (for $i=0,1,\ldots,k$), i.e., cuts supported only on the $x^i$ variables. As mentioned above, in this case the fractional mixed chromatic number reduces to the standard fractional chromatic number, which equals 2 for the star graph $\Gp$. Therefore, Theorem \ref{thm:packing} reduces to the following. 
	
	\begin{corollary} \label{cor:packing}
		Consider a 2-stage stochastic integer program \eqref{eq:stoch}. If $z^I$ is the optimal value of this integer program and $z^{sparse}$ is the optimal value of the relaxation consisting only of all $x^i$-cuts (for $i=0,\ldots,k$), then $$z^{sparse} \le 2 z^I.$$
	\end{corollary}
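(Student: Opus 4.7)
The plan is to derive Corollary \ref{cor:packing} as a direct specialization of Theorem \ref{thm:packing}, so essentially all the work is in identifying the right graph-theoretic quantities for the 2-stage setting and computing the fractional mixed chromatic number explicitly.

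First I would pin down the packing interaction graph. Under the natural block partition $\J = \{J_0, J_1, \ldots, J_k\}$, with $J_0$ indexing the first-stage variables $x^0$ and $J_i$ indexing the second-stage variables $x^i$ for $i \in [k]$, the only constraints linking two blocks are the coupling constraints $A^i x^0 + B^i x^i \le b^i$. These couple block $J_0$ with block $J_i$ but never link $J_i$ and $J_j$ for $i, j \ge 1$. Therefore $\Gp$ is the star with center $v_0$ and leaves $v_1, \ldots, v_k$.

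Next, because the single-scenario cuts considered here are exactly the cuts whose support lies in a single block $J_i$, the support list is $\mathcal{V} = \{\{v_0\}, \{v_1\}, \ldots, \{v_k\}\}$, i.e., it consists only of singletons of $V$. As noted in the paragraph just before Theorem \ref{thm:packing}, when $\mathcal{V}$ is the family of singleton vertices, the mixed stable sets coincide with the ordinary stable sets of $\Gp$, and hence $\eta^{\mathcal{V}}(\Gp)$ reduces to the standard fractional chromatic number $\chi_f(\Gp)$. The star graph is bipartite (the two sides being $\{v_0\}$ and $\{v_1, \ldots, v_k\}$), so $\chi_f(\Gp) = 2$.

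Finally, I would invoke Theorem \ref{thm:packing} with this $\mathcal{V}$ and this $\Gp$ to obtain $z^{\mathcal{V}, \pack} \le \eta^{\mathcal{V}}(\Gp)\cdot z^I = 2\, z^I$; noting that $z^{\mathcal{V}, \pack}$ is precisely the value $z^{sparse}$ in the corollary (the optimum over the relaxation obtained by adding all $x^i$-cuts) closes the argument. There is no real obstacle: the only thing to be careful about is matching the definitions, namely verifying that the singleton support list indeed makes the mixed chromatic number collapse to the ordinary fractional chromatic number, which the paper records explicitly.
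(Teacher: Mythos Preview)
Your derivation is correct: the paper itself notes, just before stating the corollary, that with the singleton support list the fractional mixed chromatic number reduces to the ordinary fractional chromatic number, which equals $2$ for the star $\Gp$, and that Theorem~\ref{thm:packing} then ``reduces to'' Corollary~\ref{cor:packing}. So invoking Theorem~\ref{thm:packing} as a black box is a legitimate route.

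However, the paper deliberately does \emph{not} prove the corollary this way. It instead gives a direct, self-contained argument meant to ``illustrate the simple ideas behind the proof of Theorem~\ref{thm:packing}.'' Concretely, the paper takes any point $\bar{x}=(\bar{x}^0,\ldots,\bar{x}^k)$ feasible for the sparse closure, defines $\tilde{x}^i \in \argmax\{(c^i)^\top x^i : x^i \in \proj_{x^i} P^I\}$ for each block, and observes that both $(\tilde{x}^0,0,\ldots,0)$ and $(0,\tilde{x}^1,\ldots,\tilde{x}^k)$ are feasible integer solutions (the latter because no constraint couples two second-stage blocks --- this is where the stable-set structure enters). Hence $\sum_i (c^i)^\top \tilde{x}^i \le 2 z^I$. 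Finally, since every valid inequality for $\proj_{x^i} P^I$ lifts to an $x^i$-cut, one has $\bar{x}^i \in \proj_{x^i} P^I$, so $(c^i)^\top \bar{x}^i \le (c^i)^\top \tilde{x}^i$, and summing gives the bound.

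Your approach is shorter and perfectly valid once Theorem~\ref{thm:packing} is available; the paper's approach is chosen for pedagogical reasons, to expose the two-coloring/stable-set decomposition explicitly without the abstraction of mixed chromatic numbers.
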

	
	We prove this corollary to illustrate the simple ideas behind the the proof of Theorem \ref{thm:packing}. 
	
	\begin{proof}[Proof of Corollary \ref{cor:packing}.]
	Let $P^I$ denote the integer hull of the integer program. Let $\bar{x} = (\bar{x}^0, \ldots, \bar{x}^k)$ be a feasible solution for the relaxation consisting only of all the $x^i$-cuts cuts (for $i = 0,\ldots,k$). We will upper bound its objective value by $2 z^I$ by constructing two good integer solutions in $P^I$. 
	
	Let $\tilde{x}^i$ be the optimal solution just for the $i$th block of the problem, namely 
		\begin{align*}
			\tilde{x}^i \in \argmax \bigg\{ (c^i)^T x^i : (0, \ldots, 0,  \tikzmark{calculator}x^i, 0,\ldots, 0) \in P^I\bigg\}.
		\end{align*}

\vspace{10pt}\noindent Because we assumed the problem to be of packing-type, this is equivalent to 
		\begin{align}
			\tilde{x}^i \in \argmax \bigg\{ (c^i)^T x^i : x^i \in \proj_{x^i} P^I\bigg\}, \label{eq:proj}
		\end{align}
		where $\proj_{x^i}$ denotes the projection onto the variables $x^i$. Assume without loss of generality that $\tilde{x}^i$ is integral.

		By definition, $(\tilde{x}^0, 0, \ldots, 0)$ is a feasible solution for \eqref{eq:stoch}; moreover, because there are no constraints in \eqref{eq:stoch} connecting any pair of different blocks $x^i,x^j$ for $i,j \ge 1$, we obtain that $(0, \tilde{x}^1, \ldots, \tilde{x}^k)$ is also a feasible solution. (This indicates the importance of stable sets in this context.) Each of these two solutions has value at most the optimum $z^I$, thus 
		\begin{align}
			\sum_{i=0}^{k} (c^i)^T \tilde{x}^i \le 2 z^I. \label{eq:proofStoch1}
		\end{align}
		
		Now we claim that $\bar{x}^i \in \proj_{x^i} P^I$: this is because every valid inequality $\alpha^i x^i \le \beta$ for $\proj_{x^i} P^I$ naturally yields the $x^i$-cut $(0,\ldots,0,\alpha^i,0\ldots,0) x \le \beta$ in the original space, which is satisfied by $\bar{x}$, and hence $\alpha^i \bar{x}^i \le \beta$. The optimality of $\tilde{x}^i$ in \eqref{eq:proj} then gives that $(c^i)^T \bar{x}^i \le (c^i)^T \tilde{x}^i$ (for all $i = 0,\ldots, k$), and so $$\sum_{i = 0}^k (c^i)^T \bar{x}^i \le \sum_{i = 0}^k (c^i)^T \tilde{x}^i.$$ Together with inequality \eqref{eq:proofStoch1} this concludes the proof.
	\end{proof}
	
	The paper~\cite{deyMolinaroWang:2017} also present similar phenomena for covering problems, and for more general MILPs where the feasible region is arbitrary together with assumptions guaranteeing that the objective function value is non-negative. This requires a different notion of interaction graph, and also the so-called ``corrected average density'' of the support list to be taken into account. The paper also present examples where these bounds are tight.
	
\section{Selecting knapsack relaxation cuts}\label{sec:agg}
Recall that an important class of cuts via the method of structured relaxation (Section~\ref{sec:intro}), is obtained via the knapsack relaxation. There are numerous ways to generate a knapsack relaxation: one can just use the original constraints, but more generally, we can take a linear combination of the constraints and obtain a knapsack relaxation. We call such cuts as \emph{aggregation cuts.} Most cuts used by modern IP solvers (except perhaps clique cuts) can be described as aggregation cuts.

The set of all aggregation cuts have been studied empirically~\cite{FukasawaG11}, but not much is known from a theoretical perspective. An important question from the perspective of cutting-plane selection is: How should we construct these relaxations? What multipliers should we use? What happens if we select cuts from relaxations obtained only using individual constraints, without aggregating?





	\subsection{Aggregation cuts and packing and covering MILPs} \label{sec:agg1}
	
		The paper~\cite{Bodur2017} examines the strength of aggregation cuts for \emph{packing} and \emph{covering} MILPs. The main result is that for these classes of problems, even considering all infinitely many aggregations, offers limited help. More precisely, they show that the CG and more generally aggregation closures can be 2-approximated by simply generating the respective closures for each of the original constraints, without using any aggregations. Therefore, for these problems, in order to obtain cuts that are much stronger than original constraint cuts, one needs to consider more complicated cuts that cannot be generated through aggregations (e.g. clique cuts).
		
	To give an idea of what is behind these results, we focus here on a weaker version of the result for aggregation cuts and packing polyhedra. 
We say that a packing polyhedron $\{x \in \R^n_+ : Ax \le b\}$ is \emph{pre-processed} if $0 \leq A_{ij} \le b_i$ for all $i,j$ (notice that otherwise all integer solutions have $x_j = 0$ so we can simply exclude this variable). 
	 Given a packing polyhedron $Q$, its \emph{aggregation closure} is defined as $$\A(Q) := \bigcap_{\lambda \in \R^m_+} \conv(\{x \in \Z^n_+ \mid \lambda^\top A x \le \lambda^\top b\}).$$ 
Given two packing sets\footnote{Packing \emph{sets} are of the form $\{x \in \R^n_+ \mid (a^i)^{\top} x \le b_i,\,\forall i \in I \}$ for a possibly infinite set $I$ where all $(a^i,b_i)$'s are non-negative, i.e., it is not necessarily polyhedral. This generalization is required because while the aggregation closure is a packing set, it is not \red{known} if it is polyhedral.} $U \supseteq V$, we say that $U$ is an \emph{$\alpha$-approximation} of $V$ if for all non-negative objective functions $c \in \R^n_+$ we have 
\begin{eqnarray}
\max\{c^\top x \mid x \in U\} \le \alpha \cdot \max\{c^\top x \mid x \in V\}.\label{eq:blowup}
\end{eqnarray}
Notice that since $U \supseteq V$, we have $\alpha \ge 1$.
	
	One of the results present in \cite{Bodur2017} is that the for pre-processed packing polyhedra the aggregation closure is quite close to the \emph{LP relaxation}.
	
	\begin{theorem} \label{thm:agg}
		For any pre-processed packing polyhedron $P$, $P$ itself is a 2-approximation of the aggregation closure $\mathcal{A}(P)$.
	\end{theorem}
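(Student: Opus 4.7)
The plan is to reduce the 2-approximation to a single-knapsack integrality-gap statement enabled by pre-processing. Since $\A(P) \subseteq P$, and since for packing sets the condition $\max_U c^\top x \le \alpha \max_V c^\top x$ (for all $c \in \R^n_+$) is implied by the set inclusion $\tfrac{1}{\alpha} U \subseteq V$, it suffices to show $\tfrac{1}{2} P \subseteq \A(P)$. By the definition of $\A(P)$, this in turn reduces to showing that for every $x^* \in P$ and every $\lambda \in \R^m_+$, $\tfrac{1}{2} x^* \in \conv(K_\lambda)$, where $K_\lambda := \{x \in \Z^n_+ : (\lambda^\top A) x \le \lambda^\top b\}$.

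Fix such a $\lambda$ and write $a := \lambda^\top A \ge 0$, $\beta := \lambda^\top b \ge 0$. The key consequence of pre-processing is that the aggregated knapsack has the \emph{small-items} property: for every $j$, $a_j = \sum_i \lambda_i A_{ij} \le \sum_i \lambda_i b_i = \beta$. Note also that $a^\top x^* \le \beta$ since $x^* \in P$.

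The central lemma I would prove is the classical small-items knapsack gap: for $a \ge 0,\beta \ge 0$ with $a_j \le \beta$ for all $j$, and every $c \in \R^n_+$,
\[
\max\{c^\top x : x \in \R^n_+,\ a^\top x \le \beta\} \le 2\,\max\{c^\top x : x \in \Z^n_+,\ a^\top x \le \beta\}.
\]
Ignoring degenerate coordinates where $a_j = 0$, the LP optimum is attained at $(\beta/a_{j^*})\,e_{j^*}$ with $j^* \in \argmax_j c_j/a_j$, yielding value $c_{j^*}\beta/a_{j^*}$. The integer point $\lfloor \beta/a_{j^*} \rfloor\,e_{j^*}$ is feasible, and since small-items forces $\beta/a_{j^*} \ge 1$, the elementary bound $\lfloor r \rfloor \ge r/2$ for $r \ge 1$ delivers at least half of the LP value.

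To conclude, for every $c \in \R^n_+$,
\[
c^\top\bigl(\tfrac{1}{2}x^*\bigr) \le \tfrac{1}{2}\max\{c^\top x : x \ge 0,\ a^\top x \le \beta\} \le \max_{K_\lambda} c^\top x = \max_{\conv(K_\lambda)} c^\top x,
\]
using $a^\top x^* \le \beta$ and the knapsack gap lemma. Thus $\tfrac{1}{2} x^*$ satisfies every packing (non-negative coefficient) inequality valid for $\conv(K_\lambda)$. Since $K_\lambda$ is downward-closed in $\Z^n_+$, every valid inequality for $\conv(K_\lambda)$ is implied by a packing inequality together with $x \ge 0$, so this suffices to conclude $\tfrac{1}{2}x^* \in \conv(K_\lambda)$. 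The main obstacles are minor: cleanly handling $a_j = 0$ coordinates in the knapsack lemma (which either send both sides to $+\infty$ or let us drop the variable) and invoking downward-closedness of $\conv(K_\lambda)$ in the last step to rule out separating hyperplanes with negative coefficients.
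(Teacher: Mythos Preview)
Your proof is correct and follows essentially the same route as the paper: both reduce the 2-approximation to the set inclusion $\tfrac{1}{2}P \subseteq \A(P)$ (equivalently $P \subseteq 2\A(P)$), observe that pre-processing is inherited by every aggregated knapsack $P_\lambda$, and then invoke the factor-2 integrality gap of a pre-processed (small-items) knapsack. The paper packages the intersection step as a standalone lemma (``relaxations with small integrality gap yield weak cuts'') and cites the knapsack gap as well known, whereas you unroll both pieces explicitly; otherwise the arguments coincide.
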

	
	We remark that the LP relaxation $P$ can have a gap of factor up to $\Omega(n)$ to the integer hull (for example, in the standard formulation of the stable set problem), thus this result says that the aggregation closure can make very little progress.  	
	
	We now present the proof of Theorem \ref{thm:agg}. At a very high level it stems from the following simple principle, which may be useful in guiding the choice of relaxations to be used for generating cuts:
	
	\begin{quote}
		\emph{``Relaxations with {small integrality gap} may yield weak cuts."}
	\end{quote}
	
	This can be formalized in the case of packing sets as follows. Given a set $Q$, let $Q^I$ denote its integer hull. 
	
	\begin{lemma} \label{lemma:basicAgg}
		Let $P \in \R^n_+$ be packing set. Let $\mathcal{Q}$ be a family of packing sets containing $P$ (e.g., relaxations). Suppose there is $\alpha$ such that for all $Q \in \mathcal{Q}$, $Q$ is an $\alpha$-approximation of $Q^I$. Let $\tilde{P} = P \cap \bigcap_{Q \in \mathcal{Q}} Q^I$, i.e., the effect of the addition of all valid cuts for the sets $Q$. Then $P$ itself is an $\alpha$-approximation of $\tilde{P}$.\footnote{One can show that the integer hull of a packing set is also a packing set~\cite{Bodur2017}, and thus so is $\tilde{P}$.} 
	\end{lemma}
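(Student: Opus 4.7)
The plan is to reduce the lemma to a single geometric containment, namely that the scaled-down set $P/\alpha$ sits inside $\tilde{P}$, from which the $\alpha$-approximation inequality for $\tilde{P}$ vs.\ $P$ follows immediately by evaluating any non-negative objective at the appropriate scaled-down optimal point of $P$.

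First I would record the packing analog of the geometric reformulation of $\alpha$-approximation highlighted in~\eqref{eq:blowupIntro} for covering sets: for two packing sets $U \supseteq V$, one has $\max\{c^\top x : x \in U\} \le \alpha \max\{c^\top x : x \in V\}$ for every $c \in \R^n_+$ if and only if $(1/\alpha) U \subseteq V$. The ``if'' direction is immediate, since scaling $U$ down by $\alpha$ and then maximizing scales the objective value by $1/\alpha$. For the ``only if'' direction, I would pick any $u \in U$ and suppose for contradiction that $u/\alpha \notin V$; separating this point from the closed convex set $V$ produces a hyperplane whose normal must lie in $\R^n_+$ (because the recession cone of the packing set $V$ is exactly $-\R^n_+$), and this normal supplies a non-negative $c$ for which the max-comparison fails.

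Next I apply this equivalence to each $Q \in \mathcal{Q}$, which by hypothesis gives $Q/\alpha \subseteq Q^I$. Now fix any $c \in \R^n_+$ and let $x^* \in \argmax\{c^\top x : x \in P\}$, and set $y := x^*/\alpha$. Because $P$ is packing and $0 \le y \le x^*$, we have $y \in P$. For each $Q \in \mathcal{Q}$, the inclusion $x^* \in P \subseteq Q$ combined with $Q/\alpha \subseteq Q^I$ yields $y \in Q^I$. Hence $y \in P \cap \bigcap_{Q \in \mathcal{Q}} Q^I = \tilde{P}$, and evaluating the objective gives
\[
\max\{c^\top x : x \in \tilde{P}\} \;\ge\; c^\top y \;=\; \tfrac{1}{\alpha}\, c^\top x^* \;=\; \tfrac{1}{\alpha}\, \max\{c^\top x : x \in P\},
\]
which is exactly the $\alpha$-approximation condition for $P$ vs.\ $\tilde{P}$.

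The only delicate step is the separation argument inside the equivalence, since $Q^I$ and $\tilde{P}$ need not be polyhedral and are defined as possibly infinite intersections. What makes the argument go through cleanly is that every set in sight is a packing set, so its recession cone is exactly $-\R^n_+$; this forces any supporting hyperplane to have a normal in $\R^n_+$, which is precisely what is needed to reach a contradiction from the max-based hypothesis. Once this point is recorded, the remainder of the proof is the one-line calculation above, which explains why the bound $\alpha$ transfers from the individual relaxations $Q$ to their simultaneous integer refinement $\tilde{P}$ without any loss.
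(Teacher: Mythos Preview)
Your proof is correct and follows essentially the same route as the paper: both hinge on the geometric reformulation that, for packing sets, $\alpha$-approximation is equivalent to the containment $U/\alpha \subseteq V$ (equivalently $U \subseteq \alpha V$), and then use $P \subseteq Q$ together with $Q/\alpha \subseteq Q^I$ to conclude $P/\alpha \subseteq \tilde{P}$. The only cosmetic differences are that the paper cites the equivalence rather than sketching the separation argument, and it works directly with the set containment $P \subseteq \alpha P \cap \bigcap_Q \alpha Q^I = \alpha \tilde{P}$ instead of passing through an argmax point.
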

	
	\begin{proof}
		One can show that given any two packing sets $U \subseteq V$, $V$ is an $\alpha$-approximation of $U$ if and only if $V \subseteq \alpha U := \{\alpha x : x \in U\}$ (see \cite{Goe95} and the extension to the non-polyhedral case in \cite{Bodur2017}). Thus, by our assumptions we have $P \subseteq Q \subseteq \alpha Q^I$ for all $Q \in \mathcal{Q}$, and since $\alpha \ge 1$ we also have $P \subseteq \alpha P$. Therefore $$P \subseteq \alpha P \cap \bigcap_{Q \in \mathcal{Q}} \alpha Q^I = \alpha \left(\bigcap_{Q \in \mathcal{Q}} \alpha Q^I\right) = \alpha \tilde{P},$$ concluding the proof. 
	\end{proof}		

	Theorem \ref{thm:agg} is a quick consequence of this principle. 
	
	\begin{proof}[Proof of Theorem \ref{thm:agg}]
		For a fixed multiplier vector $\lambda$, let $P_{\lambda} = \{x \in \R^d_+ : \lambda^{\top} A x \le \lambda^{\top} b\}$ be the relaxation obtaining by the corresponding aggregation. Notice that $P_{\lambda}$ is also a pre-processed packing polyhedron that is actually a \emph{knapsack}. It is well-known and easy to show that for pre-processed knapsacks the LP relaxation is a 2-approximation of the IP hull~\cite{Bodur2017}, thus $P_{\lambda}$ is a 2-approximation of $(P_{\lambda})^I$. The aggregation closure $\mathcal{A}(P)$ is precisely the intersection of all the sets $(P_{\lambda})^I$, and so the result follows from the Lemma \ref{lemma:basicAgg}.
	\end{proof}
	

	The paper~\cite{Bodur2017} shows that results also hold for covering problem (with and without bounds) as well with suitable notions of ``pre-processed'' polyhedra. Moreover, the paper shows that all these pre-processings of arbitrary packing/covering polyhedra can be achieved by generating CG cuts for each of the rows individually, thus yielding a bound between the 1-row CG/aggregation closures and the full aggregation closure (see \cite{Bodur2017} for more detail).

	\subsection{Aggregations for general MIPs}
	
	The paper~\cite{Bodur2017} also shows that for general MILPs the aggregation closure can be much stronger than \red{the closure of the cuts implied by the original constraints of the MILPs individually.} More precisely, the \emph{1-row closure} $1\mathcal{A}(P)$ of a mixed-integer set $P = \{ x \in \R^d \times \Z^n : Ax \le b,\, x \ge 0\}$ is obtained by intersecting the integer hulls of all relaxations $\{ x \in \R^d \times \Z^n : (a_i)^\top x \le b_i,\, x \ge 0\}$  where \red{$a_i$ is the $i^{\textup{th}}$ row of $A$. They} show the following result. 
	
	\begin{theorem}[\cite{Bodur2017}]
		There is a family of mixed-integer sets where the 1-row closure is arbitrarily worse than the aggregation closure, namely for every $\alpha \ge 0$ there is a mixed-integer set $P$ and objective function $c$ such that 
		\begin{align*}
			\max\{ c^\top x : x \in 1\mathcal{A}(P)\} \ge \alpha \cdot \max\{ c^\top x : x \in \mathcal{A}(P)\}.
		\end{align*} 
	\end{theorem}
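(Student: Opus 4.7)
The plan is to exhibit a single pair $(P,c)$ that simultaneously witnesses the claim for every $\alpha\ge 0$: a mixed-integer set whose aggregation closure pushes the objective value down to $0$ while the $1$-row closure attains a strictly positive objective value. Because $\alpha\cdot 0 = 0$ for every $\alpha\ge 0$, the desired inequality $\max_{1\A(P)} c^\top x \ge \alpha\cdot \max_{\A(P)} c^\top x$ then holds trivially for all $\alpha$ under this single construction.

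Concretely, for any fixed integer $N\ge 1$ I would take the two-row mixed-integer set $P_N = \{(x,y)\in \Z_+ \times \R_+ : x - Ny \le 0,\ x + y \le 1\}$ with objective $c = (1,0)$. The design is tailored so that aggregating the two rows with multipliers $(1,N)$ cancels the continuous variable $y$ exactly and produces the rounding constraint $(N+1)\,x \le N$, which together with $x \in \Z_+$ forces $x=0$; meanwhile each individual row's integer hull coincides with its LP relaxation and thus permits a strictly positive fractional value of $x$.

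The verification has two steps. For the aggregation closure, the multiplier $\lambda = (1,N)$ yields the single-inequality mixed-integer set $\{(x,y):(N+1)\,x \le N,\ x\in \Z_+,\ y\ge 0\}$, whose integer hull is $\{0\}\times \R_+$ because $x = 0$ is the unique nonnegative integer solving $(N+1)\,x \le N < N+1$; therefore $\A(P_N)\subseteq \{(x,y):x=0\}$ and $\max\{x:(x,y)\in\A(P_N)\} = 0$. For the $1$-row closure, row $1$'s integer points $(k,k/N)$, $k\in\Z_+$, are collinear on $y = x/N$, so its integer hull equals the LP relaxation $\{(x,y):y\ge x/N,\ x\ge 0\}$; and row $2$'s LP vertices $(0,0),(0,1),(1,0)$ are already integer, so its integer hull equals the LP relaxation $\{(x,y):x+y\le 1,\ x,y\ge 0\}$. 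The intersection $1\A(P_N)$ therefore contains the fractional point $(N/(N+1),\,1/(N+1))$, giving $\max\{x:(x,y)\in 1\A(P_N)\} \ge N/(N+1) > 0$.

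The principal subtlety lies in the aggregation step: one must verify that the integer hull of the aggregated single-inequality set is contained in $\{x=0\}$. This reduces to the elementary observation that $(N+1)\,x \le N$ has $x = 0$ as its only nonnegative integer solution, whence the integer feasible points of the aggregated relaxation are exactly $\{0\}\times \R_+$, a set that is already convex. Assembling the two bounds gives $N/(N+1) \ge \alpha \cdot 0$ for every $\alpha\ge 0$, proving the theorem.
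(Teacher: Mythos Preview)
The paper does not actually prove this theorem; it is only cited from~\cite{Bodur2017} and stated without argument. So there is no proof in the paper to compare against, and your construction stands on its own.

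Your construction is correct. With $P_N = \{(x,y)\in \Z_+\times\R_+ : x - Ny\le 0,\ x+y\le 1\}$ and $c=(1,0)$, the aggregation $\lambda=(1,N)$ yields $(N+1)x\le N$, forcing $x=0$ over $\Z_+$, so $\max\{x:(x,y)\in\A(P_N)\}=0$; meanwhile each individual row has integer hull equal to its LP relaxation, so $1\A(P_N)$ is the LP and attains $x = N/(N+1)>0$. The inequality in the statement then holds for every $\alpha\ge 0$.

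Two small remarks. First, your justification for row~1 (``integer points $(k,k/N)$ are collinear'') is a bit loose: those are not all mixed-integer feasible points, only the ones on the boundary ray. The clean argument is that the LP relaxation $\{x\ge 0,\ y\ge x/N\}$ is a pointed cone whose unique vertex $(0,0)$ is mixed-integer feasible and whose extreme rays $(0,1)$ and $(1,1/N)$ are generated by mixed-integer feasible points, hence the integer hull equals the LP. Second, by driving $\max_{\A(P)} c^\top x$ to exactly $0$ you satisfy the literal statement for all $\alpha$ with a single instance; this is perfectly valid, though arguably the ``family'' phrasing in the theorem hints at a sequence with finite, diverging ratios. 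Since the paper gives no proof, there is no way to know which route the cited reference takes.
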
	
	
	As always, such worst-case examples could be very artificial and this phenomenon \red{might not appear in practice.}
	Given the availability of reasonably robust CG cut separating procedure \cite{fischetti:lo:2007}, in our experiment we compare 1-row CG cuts \red{(CG cuts for each $\{ x \in \R^d \times \Z^n : (a_i)^\top x \le b_i,\, x \ge 0\}$)} versus general CG cuts. We study two classes of instances: random equality instances and the so-called market split instances~\cite{cornuejols:da:1999}. See details of instances in~\cite{Bodur2017}. It was empirically observed that the 1-row CG are significantly weaker than general CG cuts.

The message for cutting-plane selection is clear: For packing and covering IPs, it may be sufficient to generate cuts from knapsack cuts from original single constraints, while for more general IPs aggregations can be significantly useful. 
	
\subsection{Aggregations with sign patterns}\label{sec:sign}

We dig deeper into this question of packing and covering vs general IPs and the performance of aggregation cuts. Note that aggregation cuts can be generalized to multi-row aggregation: apply a $k$ different set of multipliers to the original constraints to obtain a $k$-constraint relaxation and add cut valid for this set.

Observe that for packing and covering IPs all the coefficients of all the variables in all the constraints have the same sign. Therefore when we aggregate constraints we are not able to ``cancel'' variables, i.e., the support of an aggregated constraint is exactly equal to the union of supports of the original constraints used for the aggregation. A natural conjecture for the fact that the aggregation (resp. multi-row aggregation, i.e.) closure is well approximated by the original (resp. multi-row) 1-row closure for packing and covering problems, is the fact that such cancellations do not occur for these problems. Indeed one of the key ideas used to obtain good candidate aggregations in the heuristic described in \cite{marchand:wo:1999} is to use aggregations that maximize the chances of a cancellation. 

In order to study the effect of cancellations, the paper~\cite{dey2017strength} examines the strength of aggregation closures vis-\`a-vis original row constraints for \emph{sign-pattern IPs}. A sign-pattern IP is a problem  of the form $\{x\in \Z^n_+:Ax\le b\}$ where a given variable has exactly the same sign in every constraint, i.e. for a given $j$, $A_{i,j}$ is either non-negative for all $i$ or non-positive for all $i$. Thus aggregations do not create cancellations. The results are interesting: On the one hand we are able to show that the aggregation closure for such sign-pattern IPs is {\it 2-approximated} by the original 1-row closure. On the other hand, the multi-row aggregation closure cannot be well approximated by the original multi-row closure. Therefore, these classes of integer programs show results that are in between packing and covering IPs on one side and general IPs on the other side.



	\section{Final remarks}
%
%
%
%
%


\red{In Section \ref{sec:intro}, we discussed various types of cutting-planes available to modern day solvers and the theoretical analysis of these cutting-planes that has been conducted. As we discussed, while the theoretical analysis conducted thus far is important, these results have so far failed to directly help in actual cutting-plane selection. In Section \ref{sec:motivate}, we then highlighted  the need to focus on alternative  questions in order to better develop the science of cutting-plane selection. We would like to throw a word of caution here. While the questions we have indicated may eventually be  important ones to pursue, \emph{the answers may not be unique.} Indeed, modern day solvers have become complex, highly integrated engineering marvels, where different components of the solvers are heavily inter-dependent, and therefore it may be altogether possible that one kind of cutting-plane selection improves the average solution time in the context of one code and miserably fails in conjunction with other codes! Moreover, different tasks performed by a solver need allocation of time and resource (memory) and the task of obtaining dual bounds is not necessarily the ``top priority'' for many industrial users of solvers. Given solvers must ``please all'', the choice of cutting-plane selection definitely must speak to different requirements and compulsions. Overall, all of this points to the fact that the questions asked in Section \ref{sec:intro} must be pursued rigorously, but specific answers for each solver may lie in balancing various competing goals in cutting-plane selection (as mentioned in Section~\ref{sec:motivate}). 
}

\red{Based on the above discussion, one is also tempted to ask if there is any value in attempting to pursue ``principled and scientific'' theory for cutting-plane selection at all. We believe that the answer is yes. Most solvers have implemented heuristics for cutting-planes selection that can be run cheaply and do not put too much pressure on other components of the solvers. To go back to the example of GMICs, one reason that most codes, before the work by~\cite{balas:ce:co:na:96},  perhaps separated one cut and resolved LP was that it was considered expensive to add too many cuts and increase the size of LPs significantly. As computer speeds improve and specific architectures (such as  parallel computing, GPUs etc.) steadily get better, it may be possible to revisit all the heuristics that current day solvers implement and improve them on the basis of hard science of integer programming and cutting-plane selection. Therefore, we believe a better understanding of cutting-plane selection in a systematic fashion is likely to improve solvers in the long run.}

We also hope that there are some specific take-aways related to cut-selection from the results  presented in Sections~\ref{sec:sparsity} and \ref{sec:agg}: For example, if we have a structured sparse packing IP, Section \ref{sec:agg} indicates that if we generate aggregation cuts, then we may restrict the selection of cuts from knapsack relaxations of the original constraints. The result of Section \ref{sec:sparseforsparse} presents indication of specific sparsity patterns of cuts to be selected, based on the sparsity pattern of the packing IP. We recognize that these results are very preliminary results; as such, we are very far from answering the questions raised in Section~\ref{sec:motivate}.
	
We end this paper with another important question: Is it possible to solve all (or any of) the questions raised in Section~\ref{sec:motivate} using \emph{machine learning?} Indeed, there is a significant trend of applying machine learning to integer programming. Recently this has been applied to branching~\cite{learnBranching,learnStrongBranching,Lodi2017}, node selection~\cite{JHUlearn},
decompositions~\cite{lubbeckeLearning}, solver parameter selection~\cite{MLcpaior,MLijcai}, load balancing in parallel branch-and-bound~\cite{quentin}, probing in MINLPs~\cite{leeLearning}, and selecting solution methods for mixed-integer quadratic programs~\cite{lodiLearning}. To the best of our knowledge machine learning has not been applied to cut generation and selection. We believe that the answer to improved and principled cut-selection may be somewhere in between using only machine learning or only theoretical analysis. It is possible to envisage that results from theoretical analysis of cutting-plane selection work in concert with learning algorithms. Indeed, learning task models may be able to leverage specific theoretical knowledge about cutting planes selection.

Overall, we hope that this review fosters inquiry into new questions and techniques that aid in cutting-plane selection, integrating practical knowledge and considerations with theoretical tools.
	

\section*{Acknowledgements}
We would like to thank Tobias Achterberg, Domenico Salvagnin and Roland Wunderling for their help with preparing this manuscript. \red{We would also like to thank anonymous reviewers for their feedback that greatly improved the presentation of this manuscript.} Santanu S. Dey would like to acknowledge the support of NSF CMMI grant \# 1562578. Marco Molinaro would like to acknowledge the support of CNPq grants Universal \#431480/2016-8 and Bolsa de Produtividade em Pesquisa \#310516/2017-0.

\bibliographystyle{plain}
\bibliography{Cuts}

\end{document}



